\theoremstyle{plain}
\newtheorem{theorem}{Theorem}[section]
\newtheorem{proposition}[theorem]{Proposition}
\newtheorem{definition}[theorem]{Definition}
\newtheorem{lemma}[theorem]{Lemma}
\numberwithin{equation}{section}
\newtheorem{remark}[theorem]{Remark}
\DeclareMathOperator{\Ran}{Ran}
\DeclareMathOperator{\Vol}{vol}
\newcommand{\R}{\mathbb{R}}
\newcommand{\dd}{{\rm d}}
\newcommand{\del}{\partial}
\newcommand{\de}{\, \mathrm{d}}
\newcommand{\set}[1]{\left\{ #1 \right\}}
\newcommand{\norm}[1]{\left\| #1 \right\|}
\DeclareMathOperator{\Lip}{Lip}
\newcommand{\blip}{\Lip}
\renewcommand{\le}{\leqslant}
\renewcommand{\ge}{\geqslant}
\renewcommand{\tilde}{\widetilde}
\newcommand{\eps}{\varepsilon}
\renewcommand{\epsilon}{\varepsilon}
\renewcommand{\phi}{\varphi}
\newcommand{\N}{\mathbb N}
\newcommand{\CH}{\mathcal{H}}
\newcommand{\CM}{\mathcal{M}}
\newcommand{\BG}{\mathbf{G}}
\newcommand{\BY}{\mathbf{Y}}
\newcommand{\bo}\boldsymbol{}
\newcommand{\RC}{\mathrm{C}}
\newcommand{\RL}{\mathrm{L}}
\newcommand{\RW}{\mathrm{W}}
\newcommand{\rd}{\mathrm{d}}
\title[Spectral inequalities in manifolds with bounded curvature]{Metric-uniform
spectral inequality for the Laplacian on manifolds with bounded sectional curvature}
\author{Alix Deleporte}
\address{Laboratoire de Math\'ematiques d'Orsay, Universit\'e Paris-Saclay,
CNRS, B\^atiment~307, 91405 Orsay Cedex, France}
\email{alix.deleporte@universite-paris-saclay.fr}
\author{Jean Lagac\'e}
\address{Department of Mathematics, King's College London,
Strand, London, WC2R 2LS, UK}
\email{jean.lagace@kcl.ac.uk}
\author{Marc Rouveyrol}
\address{Fakultät für Mathematik, Universität Bielefeld, Universitätsstrasse 25,
33615 Bielefeld, Germany}
\email{marc.rouveyrol@universite-paris-saclay.fr}
\newcommand{\vide}[1]{}
\begin{document}
	
	\begin{abstract}
		Given a Riemannian manifold $M$ endowed with a smooth metric $g$
        satisfying upper and lower sectional curvature bounds, we show an
        equivalence property between the $\RL^2$ norm on $M$ and the $\RL^2$ norm on
        subsets $\omega$ satisfying a thickness condition, for functions in the
        range of a spectral projector. The thickness condition is known to be
        optimal in this setting. The constant appearing in the equivalence of
        norms property depends only on the dimension of the manifold, curvature
        bounds, and frequency threshold of the spectral cutoff, but, crucially,
        not on the injectivity radius.
	\end{abstract}
	
	\maketitle
	%\tableofcontents
	
	\section{Introduction}

\subsection{Metric-uniform spectral inequality}

Let $(M,g)$ be smooth, complete Riemannian manifold. We investigate quantitative
unique continuation from a measurable set $\omega \subset M$ for functions with compact (Laplace)-spectral support, a
type of result known as a \emph{spectral inequality} in the control
theory literature.
Starting with the seminal work of  Lebeau and Robbiano
\cite{lebeaurobbiano1995controlechaleur}, spectral inequalities are related to
null-controllability (or equivalently, final-time observability) of the heat
equation. 

When $M$ has Ricci curvature bounded below, Deleporte and Rouveyrol proved in \cite{deleporterouveyrol2024spectralhypsurfaces}
that a necessary condition for $\omega$ to support a spectral inequality is that
it satisfies some quantitative equidistribution property in the whole manifold, known as
\emph{thickness}. In the compact
\cite{burqmoyano2022propagationheat}, flat \cite{egidiveselic2018controlheat,wangwangzhangzhang2019observability}
and hyperbolic
\cite{rouveyrol2024spectralH2,deleporterouveyrol2024spectralhypsurfaces}
settings, thickness of $\omega$ has in turn been
shown to imply that it supports a spectral inequality. In this context, the aim of the present article is to prove that the spectral
inequality holds from thick subsets in any manifold with bounded
sectional curvature, with bounds that depend only on the
dimension, the norm of the sectional curvature and the thickness of $\omega$
but, crucially, not on the injectivity radius of $M$ (which may tend to
zero at infinity). This result implies null-controllability of the heat equation with constants depending only
on those geometric quantities.

In order to state our results precisely, let us define a few objects. On
$(M,g)$, we denote by $- \Delta_g \ge 0$ the Laplacian, also known as the Laplace--Beltrami operator, defined
as the Friedrichs extension of the Dirichlet form, see \cite{lewin2022livrespectral}. Since
the Laplacian is non-negative and self-adjoint it has a
positive square root, to which we associate a spectral measure $\mu$ supported on $[0,\infty)$ and forming a resolution of the
identity in the sense that for all $u \in \RL^2(M)$, $u = \int_0^\infty \de
\mu(\lambda) u$. For $\Lambda > 0$, let $\Pi_\Lambda$ be the projector 
 onto the spectral window
$[0,\Lambda]$ defined via the functional calculus as
\begin{equation}
    \Pi_\Lambda u = \int_0^{\Lambda} \de \mu(\lambda) u.
\end{equation}
\begin{definition}
 Let $(M,g)$ be a Riemannian manifold and $\omega \subset M$ be measurable. We
 say that $(M,g,\omega)$ \emph{satisfies the spectral inequality} if there
 exists $C_{\mathrm{spec}} > 0$ such that for all $\Lambda > 0$ and $u \in
 \RL^2(M)$,
	\begin{equation}
		\label{eq: spec} \tag{Spec}
        \|\Pi_\Lambda u\|_{\RL^2(M)} \leq e^{C_{\mathrm{spec}}(\Lambda + 1)}
        \|u\|_{\RL^2(\omega)}.
	\end{equation}
\end{definition}
Thickness of a measurable set $\omega$ is, on its face, a purely measure
theoretical property.

\begin{definition}
	\label{def:thickness}
	Let $(M,g)$ be a Riemannian manifold, $\Vol$ its canonical Riemannian measure, $\omega \subset M$ measurable, and $R > 0$, $\epsilon \in (0,1)$. 
    We say that $\omega$ is \emph{thick at scale $R$ with ratio $\epsilon$} (or
    $(R, \epsilon)$-thick) if for all $x \in M$, denoting $B_x(R) \subset M$ the geodesic ball of center $x$ and radius $R$,
	\begin{equation}
		\label{eq: thickness} \tag{Thick}
		\frac{\Vol(\omega \cap B_x(R))}{\Vol(B_x(R))} \geq \epsilon.
	\end{equation}
	We say that $\omega$ is \emph{thick} if there exist some $R > 0$ and
    $\epsilon \in (0,1)$ such that $\omega$ is thick at scale $R$ with ratio
    $\epsilon$. In the rest of the article, $(R, \epsilon)$ will be called the
    \emph{thickness parameters}.
\end{definition}
We aim at addressing the following questions.
\begin{enumerate}[1.]
	\item \label{item: generality question} What is the largest class of manifolds on which the equivalence between \eqref{eq: spec} and \eqref{eq: thickness} holds?
	\item \label{item: uniformity question} Can the constant $C_{\rm spec}$ be
        proved to depend only on geometric parameters of the manifolds $(M,g)$
        (along with the thickness parameters $(R,\eps)$) rather than on the metric $g$ itself?
\end{enumerate}
With respect to Question \ref{item: generality question}, we obtain a (partial)
converse to the necessity statement of
\cite{deleporterouveyrol2024spectralhypsurfaces}, while for Question
\ref{item: uniformity question} we prove that the dimension and a bound on the
sectional curvature are the only geometric quantities on which the spectral
inequality depends. For $d \in \N$ and $\kappa \ge 0$, denote by $\CM(d,\kappa)$
the set of $d$-dimensional manifolds whose sectional curvature has norm smaller
than $\kappa$.
\begin{theorem}
	\label{theo: main result}
    Let $d \geq 2$, $\kappa \ge 0$ and $R, \epsilon > 0$. 
    There exists a constant $C_{\rm spec}$ such that for any
    $(R,\eps)$-thick subset $\omega$ of a  Riemannian manifold $(M,g) \in \mathcal{M}(d,\kappa)$ 
supports a spectral inequality with constant $C_{\rm spec}$.
\end{theorem}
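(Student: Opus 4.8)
\emph{Setup and reduction.} By replacing $g$ with $R^{-2}g$ — which multiplies the curvature bound by $R^2$, turns $(R,\eps)$-thickness into $(1,\eps)$-thickness, and replaces the spectral window $[0,\Lambda]$ by $[0,R^2\Lambda]$ — I may assume $R=1$, at the price of letting $C_{\rm spec}$ depend on $d,\kappa,\eps$. Fix $u=\Pi_\Lambda u$ and introduce the harmonic extension $w(x,s):=\cosh\!\big(s\sqrt{-\Delta_g}\big)u$ on $M\times\R$, defined by functional calculus: it solves the augmented elliptic equation $(\del_s^2+\Delta_g)w=0$, is even in $s$, and satisfies $w|_{s=0}=u$, $\del_s w|_{s=0}=0$. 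Since $\cosh\ge 1$ on the spectrum, $\|w\|_{\RL^2(M\times(-1,1))}\ge\sqrt 2\,\|u\|_{\RL^2(M)}$, while $u=\Pi_\Lambda u$ gives $\|w\|_{\RL^2(M\times(-2,2))}\le 2e^{2\sqrt\Lambda}\|u\|_{\RL^2(M)}$ and, after interior elliptic regularity, an $\RH^1$ bound on a slightly smaller slab. It therefore suffices to prove the metric-uniform interpolation estimate
\begin{equation}
\label{eq: interp}
 \|w\|_{\RL^2(M\times(-1,1))}\ \le\ C\,\|u\|_{\RL^2(\omega)}^{\alpha}\,\|w\|_{\RL^2(M\times(-2,2))}^{1-\alpha},
\end{equation}
with $C>0$, $\alpha\in(0,1)$ depending only on $d,\kappa,\eps$: inserting the two a priori bounds into \eqref{eq: interp} and absorbing $\|u\|_{\RL^2(M)}^{1-\alpha}$ yields $\|u\|_{\RL^2(M)}\le C'e^{\frac{2(1-\alpha)}{\alpha}\sqrt\Lambda}\|u\|_{\RL^2(\omega)}$, which, since $\sqrt\Lambda\le\Lambda+1$, is exactly \eqref{eq: spec}.

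\emph{Local estimate in exponential charts.} Estimate \eqref{eq: interp} will be assembled from a local version on unit balls. Fix $z\in M$ and work in the exponential chart $\exp_z\colon B(0,r_0)\subset T_zM\to M$ with $r_0=\tfrac1{10}\min(1,\pi/\sqrt\kappa)$: being below the conjugate radius, $\exp_z$ is a local diffeomorphism there — crucially \emph{with no hypothesis on $\inj(M,g)$} — and by the Rauch comparison theorem the pulled-back metric $\tilde g:=\exp_z^*g$ is pinched between the space-form metrics of curvature $\pm\kappa$, hence uniformly elliptic with $C^0$ (indeed $C^{1,\beta}$) bounds depending only on $d,\kappa$. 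The lift $\tilde w:=w\circ(\exp_z\times\id)$ solves $(\del_s^2+\Delta_{\tilde g})\tilde w=0$ on this controlled domain, with $\del_s\tilde w|_{s=0}=0$. On it I would run a Carleman estimate for $\del_s^2+\Delta_{\tilde g}$ to obtain, with constants depending only on $d,\kappa$: a three-ball (interpolation) inequality between concentric balls, and — using that the slice $\{s=0\}$ carries half the Cauchy data for free — a propagation-of-smallness estimate from a positive-density subset of $\{s=0\}$ to a smaller ball, with Hölder exponent quantified by the density. This last estimate is the mechanism (as in the authors' earlier work on hypersurfaces) that converts the purely measure-theoretic thickness of $\omega$ into a quantitative bound.

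\emph{Globalization.} Take a maximal set $\{z_i\}\subset M$ with pairwise distances $\ge r_0/8$; then $\{B_{z_i}(r_0/4)\}$ covers $M$ while $\{B_{z_i}(r_0/16)\}$ are disjoint, and Bishop--Gromov volume doubling — which needs only $\Ric\ge-(d-1)\kappa$, again \emph{not the injectivity radius} — bounds the overlap of the dilates $\{B_{z_i}(4)\}$ and the number of $r_0$-sub-balls covering any unit ball, both by constants depending only on $d,\kappa$. For each $i$: by $(1,\eps)$-thickness $\Vol(\omega\cap B_{z_i}(1))\ge\eps\Vol(B_{z_i}(1))$, so (using doubling to compare ball volumes) at least one covering sub-ball $B_{z_j}(r_0)$ carries an $\eps'(d,\kappa,\eps)$-fraction of its volume in $\omega$; applying the slice propagation-of-smallness in the chart at $z_j$ transports $\|u\|_{\RL^2(\omega\cap B_{z_i}(2))}$ into a bound on $\|w\|_{\RL^2(B_{z_j}(r_0/2)\times(-1/2,1/2))}$, and chaining the three-ball inequality through the $O(1)$ sub-balls connecting $B_{z_j}(r_0)$ to $B_{z_i}(1/2)$ (consecutive overlapping sub-balls lying in a common controlled chart) yields the local form of \eqref{eq: interp} over $B_{z_i}(1/2)\times(-1,1)$. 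Summing the squares over $i$, using bounded overlap and Hölder's inequality with exponents $1/\alpha$ and $1/(1-\alpha)$, produces \eqref{eq: interp}.

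\emph{Main obstacle.} The whole difficulty is the metric-uniformity: every constant above must be blind to $\inj(M,g)$. The scheme as stated is clean wherever $\exp_z$ stays close to a diffeomorphism, but at points of small injectivity radius $\exp_z$ is highly non-injective, so both the comparison "$\|\cdot\|$ in the chart $\approx\|\cdot\|$ on $M$" and the thickness of the lifted set degenerate. I expect this to be the technical heart of the argument, and I would handle those regions via the thick--thin (Margulis) decomposition: on the thin part the metric is, up to controlled error, invariant along the short directions, $u=\Pi_\Lambda u$ has only $O\!\big(1+\sqrt\Lambda\cdot(\text{short scale})\big)$ Fourier modes along them, and after separating those modes the problem descends to a genuinely lower-dimensional one with bounded geometry, to which the chart argument applies directly. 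Making this dichotomy quantitative and uniform — in particular comparing the short scale with $\Lambda^{-1/2}$ — is where the real work lies.
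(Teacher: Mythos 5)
Your scheme is morally in the same family as the paper's: a harmonic extension solving $(\del_s^2+\Delta_g)w=0$, a local propagation-of-smallness input on controlled charts, and a globalization by a covering argument with Bishop--Gromov controlling the overlap. But there are three substantive gaps, and the third is where the paper's argument actually lives.

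\emph{First gap: regularity of $\exp_z^*g$.} You claim that Rauch gives $C^{1,\beta}$ bounds on $\tilde g := \exp_z^*g$ depending only on $d,\kappa$. Rauch gives a bi-Lipschitz comparison of $\exp_z$ with the model-space exponential, i.e.\ $C^0$ (uniform ellipticity) control of $\tilde g$; it does not control $\nabla\tilde g$. Uniform Lipschitz control of the metric coefficients is exactly what DeTurck--Kazdan tells you is not available in exponential coordinates but is available in harmonic coordinates, and Lipschitz regularity is the minimal hypothesis under which the Logunov--Malinnikova propagation theorem for divergence-form operators applies. The paper therefore pushes to harmonic coordinates (Proposition~\ref{prop:good_charts}) precisely to secure $\blip(\tilde g) \le L$ uniformly.

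\emph{Second gap: propagation of smallness from a set of positive measure.} You propose to ``run a Carleman estimate'' to obtain propagation of smallness from a positive-density subset of the slice $\{s=0\}$. Carleman estimates propagate smallness from \emph{open} sets; propagation from a Borel set of merely positive Lebesgue measure is the content of Logunov and Malinnikova's theorem \cite[Theorem~5.1]{logunovmalinnikova2018qtttvepropagsmallness} and does not follow from a standard Carleman argument. This is the key elliptic input of the paper (Proposition~\ref{prop:LML2}), and invoking it as if it were a routine Carleman consequence undersells a genuinely deep step.

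\emph{Third gap (the main one): handling small injectivity radius.} You rightly flag metric-uniformity as the crux and propose a Margulis thick--thin decomposition with a quasi-dimensional reduction on the thin part. That is not the paper's approach and, as you acknowledge, it is not worked out; it would require substantial additional structure (approximate translation-invariance along short directions, a quantitative mode decomposition compatible with $\Pi_\Lambda$, and a uniform reduction to a lower-dimensional bounded-geometry problem) that is far from obvious in the class $\CM(d,\kappa)$. The paper instead passes to the \emph{normal cover} $\tilde M_x = \tilde B_x(r_\kappa)$: on this cover the injectivity radius is bounded below by $\kappa$ alone (Proposition~\ref{prop:rauch}), so harmonic coordinates with uniform $\blip$ are available; the lifted thick set satisfies $\Vol(\tilde\omega \cap \tilde B_x(4r)) \ge \eps \Vol(\tilde B_x(r))$ via the lower pre-image counting function $n(x,r)$; and the comparison between norms upstairs and downstairs costs a ratio $N(x,r_\kappa/4)/n(x,r)$ of counting functions which Proposition~\ref{prop:counting} shows is bounded purely in terms of $d,\kappa,r$. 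This cancellation of covering degree is the mechanism that removes the injectivity radius from the constants, and it is absent from your proposal.

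Your $\cosh$ extension instead of $\sinh(t\sqrt{-\Delta})/\sqrt{-\Delta}$, and the rescaling to $R=1$, are harmless variations. But without the normal-cover bookkeeping the covering argument breaks down precisely where the injectivity radius is small, so the proposal as written does not close.
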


\subsection{Applications to observability and control theory}
One application of spectral inequalities of the form \eqref{eq:
  spec} is the controllability and observability of the heat equation. 
\begin{definition}
	The \emph{controllability cost} at time $T > 0$ for the heat equation is the constant
	\[
	C_{\rm cont}(T,M,\omega) = \sup_{u_0, f} \frac{\|f\|_{\RL^2((0,T) \times \omega)}}{\| u_0\|_{\RL^2(M)}},
	\]
	where the supremum is taken over all initial conditions $u_0 \in \RL^2(M) \setminus \{0\}$ and controls $f~\in~\RL^2((0,T) \times M)$ such that the solution of
	\begin{equation}
		\label{eq: heat}
		\left\{ \begin{array}{rll}
			\partial_t u - \Delta_g u &= \mathbbm{1}_\omega f & \text{ in } \mathbb{R}^+_t \times M,\\
			u_{|t=0} &= u_0 & \text{ in } M
		\end{array} \right.
	\end{equation}
	satisfies $u_{|t=T} = 0$.
	
	The \emph{obervability cost} for the heat semigroup at time $T > 0$ is the smallest constant $C_{\rm obs}(T,M,\omega)$ such that the final-time observability inequality
	\begin{equation}
		\label{eq: obs} \tag{Obs}
		\forall u \in \RL^2(M), \|e^{T\Delta_g} u\|_{\RL^2(M)} \leq C_{\rm obs} \int_0^T \int_\omega |e^{t\Delta_g} u(x)|^2 d\Vol dt
	\end{equation}
	is satisfied.
	\end{definition}
	By duality, controllability and observability are equivalent;
    moreover, using general functional calculus arguments, both are
    consequences of the spectral inequality \eqref{eq: spec}.
	\begin{proposition}( \cite[Theorem
          2.44]{coron2007controlnonlinearity},
          \cite[Section 3.2]{miller2010directlebeaurobbiano},
          \cite{wangwangzhangzhang2019observability})
          \label{prop: control}
		Let $(M,g)$ be a complete Riemannian manifold, and $\omega \subset M$ a measurable subset of $M$ such that \eqref{eq: spec} holds from $\omega$. Then, $\omega$ satisfies \eqref{eq: obs} and there exists $c_1 > 0$ such that for every $T > 0$,
		\[
		C_{\rm cont}(T)=C_{\rm obs}(T) \leq \exp(c_1(1 + C_{\rm spec})^2(1 + \tfrac{1}{T})).
		\]
	\end{proposition}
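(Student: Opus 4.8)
The plan is to combine two classical reductions: the Hilbert Uniqueness Method (HUM), which identifies the controllability cost with an observability cost, and the Lebeau--Robbiano frequency-cutoff strategy, which turns the spectral inequality \eqref{eq: spec} into the final-time observability inequality \eqref{eq: obs} with an explicit constant; the whole scheme is standard, following \cite[Theorem~2.44]{coron2007controlnonlinearity}, \cite[Section~3.2]{miller2010directlebeaurobbiano} and \cite{wangwangzhangzhang2019observability}, and I only indicate the structure and, in particular, where the quantitative bound comes from. First I would invoke the duality between controllability and observability for the heat equation \eqref{eq: heat}: by HUM, null-controllability at time $T$ is equivalent to \eqref{eq: obs} for the semigroup $e^{t\Delta_g}$, with equal optimal constants $C_{\rm cont}(T) = C_{\rm obs}(T)$ (this is \cite[Theorem~2.44]{coron2007controlnonlinearity}). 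It therefore suffices to prove \eqref{eq: obs} with $C_{\rm obs}(T) \le \exp\!\big(c_1(1+C_{\rm spec})^2(1+\tfrac1T)\big)$.

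Fix $u \in \RL^2(M)$ and write $u_t := e^{t\Delta_g}u$. Two elementary facts are used: (i) since $\Pi_\Lambda$ commutes with $e^{t\Delta_g}$ and the latter is an $\RL^2$-contraction, for $0 \le t' \le t$ one has $\|\Pi_\Lambda u_t\|_{\RL^2(M)} \le \|\Pi_\Lambda u_{t'}\|_{\RL^2(M)} \le e^{C_{\rm spec}(\Lambda+1)}\|u_{t'}\|_{\RL^2(\omega)}$ by \eqref{eq: spec}; and (ii) since $I - \Pi_\Lambda$ localizes to frequencies $>\Lambda$, i.e. to the spectral subspace of $-\Delta_g$ above $\Lambda^2$, the functional calculus gives $\|(I-\Pi_\Lambda)u_t\|_{\RL^2(M)} \le e^{-\Lambda^2(t-s)}\|u_s\|_{\RL^2(M)}$ for $s \le t$. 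Splitting $u_t = \Pi_\Lambda u_t + (I-\Pi_\Lambda)u_t$ and averaging (i) over $t' \in (s,t)$ yields, for all $0 \le s < t$ and $\Lambda > 0$,
\[
\|u_t\|_{\RL^2(M)}^2 \le \frac{2\,e^{2C_{\rm spec}(\Lambda+1)}}{t-s}\int_s^t\!\!\int_\omega |u_{t'}|^2\,\de\Vol\,\de t' \;+\; 2\,e^{-2\Lambda^2(t-s)}\,\|u_s\|_{\RL^2(M)}^2 .
\]
Choosing cutoffs $\Lambda_k = 2^k\Lambda_0$ and a partition $0 = \sigma_0 < \sigma_1 < \cdots \uparrow T$ with $\ell_k := \sigma_{k+1}-\sigma_k$, and applying this estimate on each $(\sigma_k,\sigma_{k+1})$ with cutoff $\Lambda_k$, produces a linear recursion $x_{k+1} \le 2e^{-2\Lambda_k^2\ell_k}x_k + \tfrac2{\ell_k}e^{2C_{\rm spec}(\Lambda_k+1)}D_k$ for $x_k := \|u_{\sigma_k}\|_{\RL^2(M)}^2$ and $D_k := \int_{\sigma_k}^{\sigma_{k+1}}\!\int_\omega|u_{t'}|^2$. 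Iterating it and letting the number of steps go to infinity (using $x_k \to \|u_T\|_{\RL^2(M)}^2$, $\sum_k D_k \le \int_0^T\!\int_\omega|u_t|^2$, and $\sum_k \Lambda_k^2\ell_k = \infty$) bounds $C_{\rm obs}(T)$ by a series whose $k$-th term has size $\big(\prod_{i<k}2e^{-2\Lambda_i^2\ell_i}\big)\tfrac2{\ell_k}e^{2C_{\rm spec}(\Lambda_k+1)}$, exactly as in the references.

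The decisive structural point, and the source of the exponents in the claimed bound, is that the dissipation factor $e^{-2\cdot 4^k\Lambda_0^2\ell_k}$ decays \emph{quadratically} in the cutoff frequency while the spectral cost $e^{2C_{\rm spec}\cdot 2^k\Lambda_0}$ grows only \emph{linearly}; this mismatch makes the series converge once $\ell_k \asymp C_{\rm spec}/(2^k\Lambda_0)$, so that each dissipation factor beats the next spectral cost, which uses only $\sum_k \ell_k \asymp C_{\rm spec}/\Lambda_0$ of total time. Matching this to $T$ forces $\Lambda_0 \asymp (1+C_{\rm spec})/T$ (with harmless floors for $\Lambda_0$ when $C_{\rm spec}$ or $1/T$ is small, which only improve the constant), and substituting back into the dominant ($k=0$) term produces a factor $\exp(O(C_{\rm spec}\Lambda_0)) = \exp(O(C_{\rm spec}^2/T))$; collecting all terms and the lower-order contributions yields $C_{\rm obs}(T) \le \exp(c_1(1+C_{\rm spec})^2(1+\tfrac1T))$, hence the same bound for $C_{\rm cont}(T)$ by the first step.

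The conceptual input is light, but the honest labour is the bookkeeping in the last two paragraphs: controlling the accumulated spectral-inequality costs along the telescope, handling the limit in the number of steps rigorously — for instance by first proving the bound with $u_T$ replaced by $\Pi_{\Lambda_n}u_T$ and passing to the limit by monotone convergence, or equivalently by running the argument as a band-by-band construction of the control — and carrying the parameter optimization in both regimes, $T$ large and $T$ small, with the correct powers of $1+C_{\rm spec}$ and $1+1/T$. These computations are precisely those performed in \cite[Section~3.2]{miller2010directlebeaurobbiano} and \cite{wangwangzhangzhang2019observability}, from which the statement follows.
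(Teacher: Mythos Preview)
Your outline is correct and follows precisely the approach of the cited references; note that the paper itself does not supply a proof of this proposition but simply invokes \cite[Theorem~2.44]{coron2007controlnonlinearity}, \cite[Section~3.2]{miller2010directlebeaurobbiano} and \cite{wangwangzhangzhang2019observability}, so there is nothing further to compare.
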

In the context of Theorem \ref{theo: main result}, given $d, R,\epsilon,\kappa$, Proposition \ref{prop: control} yields a uniform observability cost for $(R,\epsilon)$-thick sets on $d$-dimensional manifolds with curvature
bounded by $\kappa$.

Deleporte and Rouveyrol proved the following converse statement, which
illustrates the sharpness of Theorem \ref{theo: main result}.
\begin{proposition}[\cite{deleporterouveyrol2024spectralhypsurfaces}, Theorem 2]
	\label{theo: converse}
	Let $d \geq 2$, $(M,g)$ be a $d$-dimensional manifold with Ricci curvature bounded from below and $\omega \subset M$. If $C_{\rm obs}(T,M,\omega)<+\infty$, then $\omega$ is thick.
\end{proposition}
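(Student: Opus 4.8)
The plan is to prove the contrapositive: if $\omega$ is not thick, then $C_{\rm obs}(T,M,\omega) = +\infty$ for every $T > 0$. Write $\Ric \ge -(d-1)K$ for the lower Ricci bound ($K \ge 0$). The strategy is to construct, for each $n$, a normalised test function $u_n \in \RL^2(M)$ (i.e. $\|u_n\|_{\RL^2(M)} = 1$) concentrated near a point where $\omega$ is very sparse, in such a way that the left-hand side of \eqref{eq: obs} stays bounded below by a positive constant independent of $n$ while the right-hand side tends to $0$. The crucial feature — and the reason a \emph{global} lower Ricci bound (rather than an injectivity radius bound) suffices — is that every estimate below has constants depending only on $d$ and $K$, uniformly over $M$: this applies to Bishop--Gromov volume comparison and, through it, to the two-sided Gaussian heat kernel bounds of Li--Yau (lower bound, Gaussian upper bound, and integrated Gaussian tail bound), none of which involve the injectivity radius or a volume lower bound.

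First I would set up the geometry. Fix $T > 0$, fix $\delta = 1$, and keep a scale $R \ge 2$ to be chosen at the end (depending only on $T$, $d$, $K$). Negating Definition~\ref{def:thickness} for this $R$ and $\epsilon = 1/n$ produces a point $x_n \in M$ with $\Vol(\omega \cap B_{x_n}(R)) < n^{-1}\Vol(B_{x_n}(R))$. By Bishop--Gromov there are constants $V, V'$ depending only on $R, T, d, K$ with $\Vol(B_{x_n}(R)) \le V\,\Vol(B_{x_n}(\delta))$ and $\Vol(B_y(\sqrt{2T})) \le V'\,\Vol(B_{x_n}(\delta))$ for all $y \in B_{x_n}(\delta)$; these are the only volume estimates needed. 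Then take $u_n = \Vol(B_{x_n}(\delta))^{-1/2}\,\mathbbm{1}_{B_{x_n}(\delta)}$, so $\|u_n\|_{\RL^2(M)} = 1$ and $\|u_n\|_{\RL^\infty(M)}\,\|u_n\|_{\RL^1(M)} = 1$.

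For the left-hand side of \eqref{eq: obs} I would write $\|e^{T\Delta_g}u_n\|_{\RL^2(M)}^2 = \iint p_{2T}(y,z)\,u_n(y)u_n(z)\,d\Vol\,d\Vol$, with $p$ the heat kernel, and invoke the Li--Yau lower bound $p_{2T}(y,z) \ge c\,\Vol(B_y(\sqrt{2T}))^{-1}$ for $d(y,z) \le 2$ (hence for $y,z \in B_{x_n}(\delta)$); together with the volume bound above this gives $\|e^{T\Delta_g}u_n\|_{\RL^2(M)}^2 \ge c/V' =: c_1 > 0$, uniformly in $n$. For the right-hand side I would split $\omega$ into $\omega \cap B_{x_n}(R)$ and $\omega \setminus B_{x_n}(R)$. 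On the first piece, sub-Markovianity of the heat semigroup gives $\|e^{t\Delta_g}u_n\|_{\RL^\infty(M)} \le \|u_n\|_{\RL^\infty(M)}$, whence $\int_0^T \int_{\omega \cap B_{x_n}(R)} |e^{t\Delta_g}u_n|^2 \le T\,\Vol(\omega \cap B_{x_n}(R))/\Vol(B_{x_n}(\delta)) \le TV/n \to 0$. On the second piece, $d(y,x_n) \ge R$ and $z \in B_{x_n}(\delta)$ force $d(y,z) \ge R - \delta$, so the Gaussian upper bound for $p_t$ together with the integrated tail bound $\int_{d(\cdot,z)\ge \rho} p_t(\cdot, z)\,d\Vol \le C e^{CKt - \rho^2/(Ct)}$ (constants depending only on $d, K$) give, using $\|u_n\|_{\RL^\infty(M)}\,\|u_n\|_{\RL^1(M)} = 1$, the bound $\int_{M \setminus B_{x_n}(R)} |e^{t\Delta_g}u_n|^2 \le \eta(R) := C e^{CKT - (R-\delta)^2/(CT)}$, uniformly in $n$ and in $t \le T$, with $\eta(R) \to 0$ as $R \to \infty$.

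To finish, I would fix $R$ so large that $T\,\eta(R) < c_1/2$: then for all large $n$ the right-hand side of \eqref{eq: obs} applied to $u_n$ is $< c_1 \le \|e^{T\Delta_g}u_n\|_{\RL^2(M)}^2$, contradicting any finite $C_{\rm obs}$, so $C_{\rm obs}(T,M,\omega) = +\infty$. I expect the main (in fact only) non-routine point to be the uniformity of the heat kernel machinery: the Li--Yau lower bound, the Gaussian upper bound, the integrated Gaussian tail bound, and Bishop--Gromov comparison must all be quoted with constants depending solely on $d$ and on the lower Ricci bound — with no dependence on the injectivity radius or on volume lower bounds — after which the whole argument is merely a bookkeeping of these uniform estimates along the sequence $x_n$.
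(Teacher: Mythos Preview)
The paper does not supply a proof of this proposition; it is cited from \cite{deleporterouveyrol2024spectralhypsurfaces} (Theorem~2 there) solely to illustrate the sharpness of Theorem~\ref{theo: main result}, and no argument is reproduced here. There is consequently nothing in the present paper to compare your proof against.

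Your outline is nonetheless a correct and standard argument for this type of converse: localise normalised initial data at bad points, bound $\|e^{T\Delta_g}u_n\|_{\RL^2}$ from below via the Li--Yau lower heat-kernel bound, and kill the observation integral by splitting $\omega$ into a near part (controlled by sub-Markovianity and the sparseness assumption) and a far part (controlled by the integrated Gaussian tail). The $\RL^2$ estimate on the far part indeed follows from $\|u_n\|_{\RL^\infty}\|u_n\|_{\RL^1}=1$ after one application of Fubini, as you indicate. One cosmetic slip: in your last paragraph you compare the right-hand side of \eqref{eq: obs} to $c_1 \le \|e^{T\Delta_g}u_n\|_{\RL^2(M)}^2$, but in the paper's normalisation the left-hand side of \eqref{eq: obs} is the unsquared norm, so the relevant statement is that the right-hand side tends to $0$ while the left-hand side stays $\ge \sqrt{c_1}$; this changes nothing. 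You have correctly isolated the only substantive point, namely that Bishop--Gromov, the Li--Yau lower bound, and the Gaussian upper and integrated tail bounds all hold with constants depending only on $d$ and the Ricci lower bound, with no injectivity-radius or volume-lower-bound input.
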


Beyond \eqref{eq: obs}, we also prove an observability result for the heat equation \emph{at
  fixed time}. 
\begin{theorem}\label{theo: fixed time obs}
    Let Let $d \geq 2$, $\kappa \ge 0$, and $R, \eps, s > 0$. There exist $C>0$ and $\alpha\in (0,1)$
    such that for any Riemannian manifold $(M,g) \in
    \mathcal{M}(d,\kappa)$, and any $(R,\epsilon)$-thick set $\omega
    \subset M$, for every $u\in \RL^2(M,\R)$,
    \[
      \|e^{s\Delta_g}u\|_{\RL^2(M)}\leq C\|e^{s\Delta_g}u\|_{\RL^2(\omega)}^{\alpha}\|u\|_{\RL^2(M)}^{1-\alpha}.
    \]
    In particular, there exists $C_1$ such that, for every $\delta>0$,
    \[
      \|e^{s\Delta_g}u\|_{\RL^2(M)}\leq
      C_1\delta^{-\frac{1-\alpha}{\alpha}}\|e^{s\Delta_g}u\|_{\RL^2(\omega)}+
      \delta \|u\|_{\RL^2(M)}.
      \]
    \end{theorem}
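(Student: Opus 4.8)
The plan is to obtain Theorem~\ref{theo: fixed time obs} as a soft consequence of the spectral inequality of Theorem~\ref{theo: main result}, via the classical Lebeau--Robbiano frequency-cutting argument, taking care throughout that every constant introduced depends only on $d,\kappa,R,\eps$ and $s$, and not on the metric $g$ itself --- this metric-uniformity being inherited directly from the corresponding uniformity of $C_{\rm spec}$ in Theorem~\ref{theo: main result}. I would write $v := e^{s\Delta_g}u$, fix a frequency threshold $\Lambda>0$ to be chosen at the end, and split $v = \Pi_\Lambda v + (\Id-\Pi_\Lambda)v$.

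For the low-frequency part, Theorem~\ref{theo: main result} gives $\|\Pi_\Lambda v\|_{\RL^2(M)} \le e^{C_{\rm spec}(\Lambda+1)}\|\Pi_\Lambda v\|_{\RL^2(\omega)}$, and then $\|\Pi_\Lambda v\|_{\RL^2(\omega)} \le \|v\|_{\RL^2(\omega)} + \|(\Id-\Pi_\Lambda)v\|_{\RL^2(M)}$. For the high-frequency part I would use parabolic smoothing: since $\Id-\Pi_\Lambda$ is supported on the part of the spectrum of $\sqrt{-\Delta_g}$ above $\Lambda$, the functional calculus yields $\|(\Id-\Pi_\Lambda)v\|_{\RL^2(M)} = \|(\Id-\Pi_\Lambda)e^{s\Delta_g}u\|_{\RL^2(M)} \le e^{-s\Lambda^2}\|u\|_{\RL^2(M)}$. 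Adding the two contributions and abbreviating $A=\|v\|_{\RL^2(M)}$, $B=\|v\|_{\RL^2(\omega)}$, $D=\|u\|_{\RL^2(M)}$, this produces, for every $\Lambda>0$,
\[
  A \le e^{C_{\rm spec}(\Lambda+1)}B + 2\,e^{C_{\rm spec}(\Lambda+1)-s\Lambda^2}D.
\]
In particular, there are $c_0,c_1,\Lambda_0>0$ depending only on $d,\kappa,R,\eps,s$ such that $A\le e^{c_0\Lambda}B + e^{-c_1\Lambda}D$ for all $\Lambda\ge\Lambda_0$, using $e^{-s\Lambda^2}\le e^{-c_1\Lambda}$ once $\Lambda$ is large enough to absorb the factor $e^{C_{\rm spec}(\Lambda+1)}$.

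It then remains to optimise in $\Lambda$. If $D \ge e^{(c_0+c_1)\Lambda_0}B$ I would choose $\Lambda=\tfrac{1}{c_0+c_1}\log(D/B)\ge\Lambda_0$, which balances the two terms and gives $A \le 2\,B^{\alpha}D^{1-\alpha}$ with $\alpha := \tfrac{c_1}{c_0+c_1}\in(0,1)$; if instead $D < e^{(c_0+c_1)\Lambda_0}B$, one uses the trivial contraction bound $A=\|e^{s\Delta_g}u\|_{\RL^2(M)}\le\|u\|_{\RL^2(M)}=D$ together with $D^\alpha < e^{\alpha(c_0+c_1)\Lambda_0}B^\alpha$ to again obtain $A\le e^{\alpha(c_0+c_1)\Lambda_0}B^\alpha D^{1-\alpha}$. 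Both $\alpha$ and the resulting constant $C$ depend only on $d,\kappa,R,\eps,s$, as required. The second displayed inequality of the theorem then follows from the first by Young's inequality, writing $B^\alpha D^{1-\alpha} = (\delta^{-(1-\alpha)/\alpha}B)^\alpha(\delta D)^{1-\alpha} \le \delta^{-(1-\alpha)/\alpha}B + \delta D$ and absorbing $C$ into $C_1$ (equivalently, rescaling $\delta$).

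I do not expect a genuine obstacle here: the entire analytic content sits in Theorem~\ref{theo: main result}, and what remains is the textbook Lebeau--Robbiano interpolation together with constant bookkeeping. The only two points deserving care are (i) verifying that no constant secretly depends on $g$ --- immediate from the metric-uniformity built into Theorem~\ref{theo: main result} --- and (ii) treating the degenerate regime $D\lesssim B$ in the optimisation, where balancing is impossible and one falls back on the contraction estimate $\|e^{s\Delta_g}\|_{\RL^2(M)\to\RL^2(M)}\le 1$.
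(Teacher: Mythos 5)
Your argument is correct, but it takes a genuinely different route from the paper's own proof. The paper does not pass through the spectral inequality of Theorem~\ref{theo: main result} at all: instead it applies Proposition~\ref{prop:LM_global} directly to the harmonic extension
\[
  w_s(x,t)=\int_0^\infty e^{-s\lambda^2}\frac{\sinh(\lambda t)}{\lambda}\,[\dd\mu(\lambda)u](x),
\]
noting that the Gaussian damping $e^{-s\lambda^2}$ coming from the heat semigroup makes this extension belong to $\RL^2(M\times[-T,T])$ with $\|w_s\|_{\RL^2(M\times[-T,T])}\le C(s,T)\|u\|_{\RL^2(M)}$, so no spectral truncation is needed; one application of the propagation-of-smallness estimate then yields the stated two-term product inequality in a single stroke, with $\alpha$ equal to the exponent $\beta$ of Proposition~\ref{prop:LM_global}. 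You instead run the Lebeau--Robbiano machine in reverse: split $v=e^{s\Delta_g}u$ into $\Pi_\Lambda v + (\Id-\Pi_\Lambda)v$, estimate the low frequencies by \eqref{eq: spec}, exploit the same Gaussian decay in the form $\|(\Id-\Pi_\Lambda)e^{s\Delta_g}u\|\le e^{-s\Lambda^2}\|u\|$ for the tail, and optimise over $\Lambda$, treating separately the degenerate regime $D\lesssim B$ via the contraction bound (you should also remark that when $B=0$ sending $\Lambda\to\infty$ forces $A=0$, so the conclusion holds vacuously). Both approaches give constants depending only on $d,\kappa,R,\eps,s$; the paper's is shorter, more self-contained (it is a sibling of the proof of Theorem~\ref{theo: main result} rather than a corollary of it) and produces $\alpha$ directly from the elliptic estimate, whereas yours is more modular, reusable with any self-improving semigroup bound of the form $e^{-s\Lambda^2}$ replaced by any decaying multiplier, and makes transparent exactly where the constant uniformity enters (namely only through $C_{\rm spec}$). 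Both are valid; yours just re-derives rather than re-uses the underlying elliptic mechanism.
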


\subsection{Further remarks and comparison to existing results}

The spectral inequality \eqref{eq: spec} has previously been investigated
in the non-compact manifold setting
in \cite{miller2005uniquecontinuationnoncompact} and
\cite{rosetautenhahn2023unique}. Our geometric assumption is slightly stronger
than the one used in both of these works (bounded sectional curvature rather
than a lower bound on Ricci curvature), though we do not need the interpolation
inequality assumption of \cite{miller2005uniquecontinuationnoncompact} and
obtain a high-frequency result, unlike in \cite{rosetautenhahn2023unique}. The
sufficiency of thickness for spectral observability is new in such general
geometries, and so is the uniform character of $C_{\rm spec}$. Besides, our
proof is still valid in the compact setting, generalizing previous spectral
estimate results
\cite{burqmoyano2022propagationheat,laurentleautaud2021conjlucmiller} by
specifying the qualitative geometric dependence of the constant.

Theorem \ref{theo: fixed time obs} is inspired by similar pointwise observability results for weakly diffusive \cite{alphonsemartin2022nullcontdiffeq} and Baouendi--Grushin operators \cite{alphonseseelmann2025baouendigrushin}, which stem from similar propagation of smallness arguments.

\subsection{Summary of the proof}
\label{sec: sketch of proof}

We now sketch the proof of Theorems \ref{theo: main result} and \ref{theo: fixed time obs}. The general scheme borrows from the works of Jerison, Lebeau and Robbiano \cite{lebeaurobbiano1995controlechaleur,jerisonlebeau1999nodal} and their recent adaptations to the non-compact manifold setting \cite{burqmoyano2021spectralestimates,rouveyrol2024spectralH2,deleporterouveyrol2024spectralhypsurfaces}.

Given $(M,g) \in \mathcal{M}(d,\kappa)$, $\Lambda > 0$ and a function $u \in
\Ran(\Pi_\Lambda)$, we define, via the functional calculus, a harmonic extension of $u$ by
\begin{equation}
	\label{eq: harmonic extension}
    w(t,x) = \frac{\sinh(t \sqrt{- \Delta_g})}{\sqrt{-\Delta_g}} u,
\end{equation}
which satisfies
\begin{equation}
	(\partial_t^2 + \Delta_g) w = 0 \text{ in } \mathbb{R} \times M,
\end{equation}
as well as boundary conditions suitable to applying elliptic propagation of smallness estimates to $w$. A similar harmonic extension of $e^{s\Delta_g}u$ is used in the proof of Theorem \ref{theo: fixed time obs}.

We then apply Logunov and Malinnikova's propagation of smallness inequalities
from co-dimension 1 subsets for gradients of harmonic functions \cite[Theorem
5.1]{logunovmalinnikova2018qtttvepropagsmallness} 
in carefully constructed charts. However, to obtain a uniform local inequality, these charts need to satisfy the following conditions:
\begin{enumerate}[(i)]
	\item \label{item: LM domain} the domain of the charts must be the same,
	\item \label{item: LM metric} the metric must have uniform ellipticity and
        Lipschitz constants in those charts,
	\item \label{item: LM measure} the measure of the sets from which smallness is propagated must be uniformly bounded from below.
\end{enumerate}

We solve issues \eqref{item: LM domain} and \eqref{item: LM metric} using 
harmonic coordinates. For the metric to be sufficiently regular in those
coordinates we would require at least a bound on the Ricci
curvature. However, the construction of harmonic coordinates
also requires a lower bound on the local injectivity radius. 
Besides, point \eqref{item: LM measure} cannot be straightforwardly implied by
\eqref{eq: thickness} due to the possibly small denominator $\Vol(B(x,R))$
(for example, $\Vol(B(x,R))$ vanishes asymptotically when $x$ tends towards
the end of a cusp in a hyperbolic surface). We solve both issues at once by
constructing good charts in a local covering space of the manifold by the
exponential map, the \textit{normal cover}, which requires us to strengthen our
assumption to bounded sectional curvature.

The norm bound on the sectional curvature enters play in two different ways.
First, as a lower bound on sectional curvature which provides a key volume doubling inequality
(Proposition \ref{prop:vol_db}) that is repeatedly used throughout the article,
notably for counting arguments. Then, as an upper bound on the sectional
curvature which plays a role in the construction
of the aforementioned harmonic coordinates.

We note that replacing the Logunov--Malinnikova inequality with \cite[Theorem
2.2]{lebalchmartin2024quantitative} generalises our result to
operators $H = -\Delta_g + V$ with potentials $V \in \RL^\infty(M)$ with the
caveat that there is an
additional dependence of $C_{\rm spec}$ on $\|V\|_{\RL^\infty}$.

The article is divided as follows: in Section \ref{sec:good-coord-charts}, we
recall geometric preliminaries, and define and investigate the $\mathcal{M}(d,\kappa)$ class of
manifolds and the uniform properties of their normal covers.
In Section \ref{sec:prop-smalln-lipsch}, we prove the local
elliptic estimates based on the Logunov--Malinnikova inequality and extend them
uniformly within the class $\CM(d,\kappa)$. Section \ref{sec:endgame} contains the conclusion
of the proof of Theorems \ref{theo: main result} and \ref{theo: fixed time
obs}.

\subsection*{Acknowledgments}

A.D. and M.R. acknowledge funding from the ERC-2022-ADG grant GEOEDP (project
101097171). J.L. would like to thank the Isaac Newton Institute for
Mathematical Sciences, Cambridge, for support and hospitality during the
programme Geometric spectral theory and applications, where the final stages of this paper
were written. This work was supported by EPSRC grant EP/Z000580/1.

The authors would like to thank Nicolas Burq, Paul Alphonse, and
Eugenia Malinnikova for interesting discussions concerning the topic of this article.

	%\section{Preliminaries}
%\label{sec:preliminaries}

\section{Geometric preliminaries}
\label{sec:good-coord-charts}

\subsection{Geometric quantities and notation}
Let $M$ be a $d$-dimensional smooth manifold. A Riemannian metric $g$ on $M$ is
an assignment, for every $x \in M$, of a positive definite bilinear form on the
tangent space $T_xM$, that is a positive section of $T^*M \otimes T^*M$.
Locally, the metric $g$ has the coordinate expression
\begin{equation}
    g(x) = \sum_{j,k} g_{jk}(x) \dd x^j \otimes \dd x^k,
\end{equation}
where $\{\dd x^j : 1 \le j \le d\}$ is a smoothly varying basis for the
cotangent space. We denote by $g^{jk}(x)$ the local coordinate expression for
the inverse of the metric and regularly omit the dependence of the metric (and
of other tensors) on the base point $x$. While the metric is an intrinsic
object, in the sense that the inner product on $TM$ is independent of the choice
of local coordinates, its regularity can only be inspected in coordinates and is
defined in a chart as the regularity of the coefficients $g_{jk}$. The main
control we need is given as follows, of course defined for the expression of a
metric in a chart.
\begin{definition}
    Let $g$ be a Riemannian metric defined on some open set $\Omega \subset \R^d$. The
    \emph{biLipschitz constant} $\blip(g)$ is defined as
    \begin{equation}
        \blip(g) :=
        \max\set{\|g\|_{\RW^{1,\infty}(\Omega)},\|g^{-1}\|_{\RL^\infty}(\Omega)}.
    \end{equation}
    If $g$ is defined on a manifold, its biLipschitz constant is the infimum
    over all coverings of $M$  by charts of the supremum of the biLipschitz
    constants read in each of those charts.
\end{definition}
For $x \in M$
and $r > 0$, we denote the ball of radius $r$ around $x$ by $B_x(r)$.
Since we will repeatedly use alternate between balls in $M$ and balls in $T_xM$
we denote the ball of radius $r$ around $0\in T_xM$ by $\tilde B_x(r)$. In
particular, this data gives us one example of a geometrically defined chart around a point. 
    \begin{definition}
        Given a Riemannian manifold with Lipschitz metric $(M,g)$ and $x \in M$, the
        \emph{exponential map} $\exp_x : T_xM  \to M$ assigns to $v \in T_xM$
        the position $\exp_x(v)$ at time $1$ of the geodesic with initial data
        $(x,v)$ at time $0$. The \emph{injectivity radius} at $x$ is the maximal
        $r > 0$ such that the restriction $\exp_x : \tilde B_x(r) \to B_x(r)$ is
        injective.
    \end{definition}

The Laplacian on $M$, also known as the Laplace--Beltrami operator,
is given in local coordinates as
\begin{equation}
    \Delta_g u = \frac{1}{\sqrt{|\det g|}} \sum_{j,k} \del_j \left(\sqrt{|\det
    g|} g^{jk} \del_k\right),
\end{equation}
in other words it is represented locally as an elliptic operator in divergence
form, and any theorem stated in Euclidean space for such an operator can be
applied wholesale in charts to the Laplace--Beltrami operator.

Two important intrinsic geometric quantities associated with the metric are the
\emph{Ricci curvature} $\operatorname{Ric}$ and the \emph{sectional curvature}
$K$. Since we will not be
working directly with the expression of those curvatures, but rather with
quantities explicitly controlled by bounds on the curvature, we neither provide a
precise definition nor describe the difference between both, other than
observing that bounds on the sectional curvature imply bounds on the Ricci
curvature. We use bounds on the sectional curvature to define classes of manifolds
without referring to local charts, we will point out which of our lemmas
only require bounds on the Ricci curvature.

  \begin{definition}\label{def:bdcurv}
    Let $\kappa\in [0,+\infty)$ and $d\geq 1$. We denote by 
    \begin{equation}
        \label{eq: sectional curvature bound}
        \CM(d,\kappa) := \set{(M,g) : \operatorname{dim}(M) = d \text{ and }
        \|K\|_{\RL^\infty} \le \kappa}
    \end{equation}
     the set of $d$-dimensional
     manifolds with (sectional) curvature bounded by $\kappa$. Note that
     elements of $\CM(d,\kappa)$ are neither assumed to be compact nor
     connected.
  \end{definition}

  \subsection{Volume and density bounds}
Manifolds of bounded curvature satisfy a volume doubling inequality, which in
turn implies bounds on the density of uniformly discrete sets.
All results in this subsection depend, in fact, only on lower bounds for the
Ricci curvatures. The doubling inequality can  be read directly off the
  Bishop--Gromov inequality \cite[Lemma 5.3.bis]{gromov2007metricstructures},
  and we state it for convenience of the reader.
  \begin{proposition}\label{prop:vol_db}
      Let $\kappa>0$ and $d \in \mathbb{N}$. There is $C > 0$ such that for every $(M,g) \in
      \CM(d,\kappa)$, $x \in M$ and $R > r > 0$, we have the inequality
    \begin{equation}
    	\label{eq: weak volume doubling}
        {\rm Vol}(B_x(R))\leq C \left(\frac R r\right)^d e^{(d-1) \sqrt \kappa(R - r)} {\rm Vol}(B_x(r)).
    \end{equation}
  \end{proposition}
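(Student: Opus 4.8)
\textbf{Proof plan for Proposition \ref{prop:vol_db}.}

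The plan is to deduce the inequality directly from the Bishop--Gromov volume comparison theorem. Since $(M,g)\in\CM(d,\kappa)$, the sectional curvature is bounded below by $-\kappa$, hence the Ricci curvature satisfies $\Ric \ge -(d-1)\kappa\, g$. Let $V_{-\kappa}(r)$ denote the volume of a ball of radius $r$ in the $d$-dimensional simply connected space form of constant curvature $-\kappa$ (that is, the appropriately rescaled hyperbolic space $\HH^d$, or Euclidean space if $\kappa=0$). Bishop--Gromov asserts that for every $x\in M$ the function $r\mapsto \Vol(B_x(r))/V_{-\kappa}(r)$ is non-increasing on $(0,\infty)$. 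In particular, for $R>r>0$,
\begin{equation}
    \frac{\Vol(B_x(R))}{\Vol(B_x(r))} \le \frac{V_{-\kappa}(R)}{V_{-\kappa}(r)}.
\end{equation}

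It then remains to bound the model-space ratio $V_{-\kappa}(R)/V_{-\kappa}(r)$ by $C(R/r)^d e^{(d-1)\sqrt\kappa(R-r)}$ with $C$ depending only on $d$. One computes $V_{-\kappa}(\rho) = c_d \int_0^\rho \bigl(\frac{\sinh(\sqrt\kappa\, t)}{\sqrt\kappa}\bigr)^{d-1}\de t$ (with $V_0(\rho)=c_d\rho^d/d$ in the flat case). Using the elementary bounds $t \le \frac{\sinh(\sqrt\kappa\, t)}{\sqrt\kappa} \le t\, e^{\sqrt\kappa\, t}$ valid for $t\ge 0$, one gets a lower bound $V_{-\kappa}(r) \ge c_d r^d/d$ and an upper bound $V_{-\kappa}(R) \le c_d R^d e^{(d-1)\sqrt\kappa R}/d$; taking the quotient and absorbing a factor $e^{(d-1)\sqrt\kappa r}$ (which is bounded by $e^{(d-1)\sqrt\kappa R}$, or alternatively handled by writing $e^{(d-1)\sqrt\kappa R} = e^{(d-1)\sqrt\kappa(R-r)}e^{(d-1)\sqrt\kappa r}$ and noting $r<R$ so this overcounts harmlessly) yields the claimed form after adjusting the constant $C=C(d)$. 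Slightly more care gives the sharper exponent $e^{(d-1)\sqrt\kappa(R-r)}$ by comparing the integrals $\int_0^R$ and $\int_0^r$ directly rather than bounding each separately; this is the only mildly delicate computation.

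There is essentially no obstacle here: the statement is, as the text says, a direct consequence of Bishop--Gromov, and the only work is the elementary estimate on the space-form volume function, which is a one-line calculus exercise. The hypothesis $\kappa>0$ in the proposition lets us avoid separately discussing the flat case, though the flat case is in fact the limit $\kappa\to 0$ and gives the cleaner bound $(R/r)^d$ with no exponential factor.
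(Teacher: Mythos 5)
Your proposal is correct and takes the same route as the paper, which simply cites Bishop--Gromov (Gromov's book) without spelling out the model-space estimate; the content is exactly the elementary bound on $V_{-\kappa}(R)/V_{-\kappa}(r)$. You are right to flag that the crude bounds $t\le \sinh(\sqrt\kappa t)/\sqrt\kappa \le t e^{\sqrt\kappa t}$ only yield the weaker exponent $e^{(d-1)\sqrt\kappa R}$; the sharper $e^{(d-1)\sqrt\kappa(R-r)}$ does follow (e.g. by writing $V_{-\kappa}(R)=V_{-\kappa}(r)+\int_r^R$ and treating the regimes $\sqrt\kappa r\le 1$ and $\sqrt\kappa r\ge 1$ separately, using that $\sinh(u+h)\le 2\sinh(u)e^{h}$ for $u\ge 1$), so the step you call ``mildly delicate'' is indeed a short calculus exercise and not a gap.
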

  Proposition \ref{prop:vol_db} allows us to control the
  density of discrete sets in $M$.
  \begin{proposition}\label{prop:covering_balls}
      Let $d \in \N$, $\kappa > 0$. There is $C >
      0$ such that for every $r, R > 0$, any $M \in \CM(d,\kappa)$, and every $r$-separated set $\BY
      \subset M$,
      \begin{equation}
          \#(\BY \cap B_x(R)) \le c \left(\frac R r \right)^d e^{2(d-1) \sqrt \kappa
          R}
      \end{equation}
      for all $x \in M$.
  	\end{proposition}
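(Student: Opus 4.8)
The plan is to deduce Proposition \ref{prop:covering_balls} from the volume doubling inequality of Proposition \ref{prop:vol_db} by a standard packing argument. Let $\BY \subset M$ be $r$-separated, meaning that any two distinct points of $\BY$ are at distance at least $r$. Fix $x \in M$ and consider the points of $\BY$ lying in $B_x(R)$. The key observation is that the balls $B_y(r/2)$ for $y \in \BY \cap B_x(R)$ are pairwise disjoint (by the triangle inequality and $r$-separation), and all contained in $B_x(R + r/2)$.

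The main step is then a volume comparison. On one hand, by disjointness,
\[
    \sum_{y \in \BY \cap B_x(R)} \Vol(B_y(r/2)) \le \Vol(B_x(R + r/2)).
\]
On the other hand, each $y \in \BY \cap B_x(R)$ satisfies $B_x(R + r/2) \subset B_y(2R + r/2)$, and hence by Proposition \ref{prop:vol_db} applied with the pair of radii $2R + r/2 > r/2$ around the point $y$,
\[
    \Vol(B_x(R + r/2)) \le \Vol(B_y(2R + r/2)) \le C \left(\frac{2R + r/2}{r/2}\right)^d e^{(d-1)\sqrt{\kappa}(2R + r/2 - r/2)} \Vol(B_y(r/2)).
\]
Dividing, each term in the left-hand sum is bounded below by a fixed fraction of $\Vol(B_x(R+r/2))$, so the number of terms is at most $C (1 + 4R/r)^d e^{2(d-1)\sqrt{\kappa}R}$. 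Absorbing the $(1 + 4R/r)^d$ into a constant of the shape $c(R/r)^d$ (valid when $R \gtrsim r$; the case $R \lesssim r$ giving $\#(\BY \cap B_x(R)) = O(1)$ can be folded into the constant as well, or handled by noting $(1+4R/r)^d \le 5^d (R/r)^d$ once $R \ge r$ and is $O(1)$ otherwise) yields the claimed bound with a dimensional-and-$\kappa$-dependent constant $c$.

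There is essentially no serious obstacle here; the only points requiring a little care are (a) making sure the constant genuinely depends only on $d$ and $\kappa$ and not on $r,R$ or the manifold — which is automatic since Proposition \ref{prop:vol_db} already has a uniform constant — and (b) bookkeeping the radii so that the exponential factor comes out as $e^{2(d-1)\sqrt{\kappa}R}$ rather than something larger; the choice of $r/2$ as the small radius and $2R + r/2$ as the large one is engineered precisely so that the difference of radii in the exponent of \eqref{eq: weak volume doubling} equals $2R$. One should also note, as the statement itself does, that this argument uses only the lower bound on Ricci curvature, since that is all Proposition \ref{prop:vol_db} (via Bishop--Gromov) requires.
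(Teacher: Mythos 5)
Your proof is correct and follows essentially the same packing argument as the paper: disjointness of the balls $B_y(r/2)$, containment in a slightly larger ball around $x$, and a volume comparison via Proposition~\ref{prop:vol_db}. The only difference is cosmetic bookkeeping — you compare the packing volume to $\Vol(B_x(R+r/2))$ and apply the doubling inequality once with radii $2R+r/2 > r/2$, whereas the paper compares to $\Vol(B_x(R))$ and applies the doubling inequality twice, recombining the exponents at the end; both yield $e^{2(d-1)\sqrt{\kappa}R}$.
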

  	\begin{proof}
        Let $r > 0$ and $\BY \subset M$ be an $r$-separated set. For all $y,y' \in \BY$,
        \begin{equation}
            y \ne y' \quad \Longrightarrow \quad B_y(r/2) \cap B_{y'}(r/2) =
            \varnothing,
        \end{equation}
        so that for any $x \in M$ and $R > 0$
        \begin{equation}
            \label{eq:subset}
            \begin{aligned}
                \sum_{y \in \BY \cap B_x(R)}  \Vol(B_y(r/2)) 
            \le  \Vol(B_x(R + r/2))  
            \le C e^{(d-1) \sqrt \kappa \tfrac r 2} \Vol(B_x(R)).
        \end{aligned}
        \end{equation}
        On the other hand, $B_x(R) \subset B_y(2R)$ for every $y \in \BY \cap
        B_x(R)$.
        Thus,
        \begin{equation}
            \label{eq:doublingcovering}
            \Vol(B_x(R)) \le \min_{y \in \BY \cap B_x(R)} \Vol(B_y(2R)) \le C
            \left(\frac{R}{r}\right)^d e^{(d-1)
            \sqrt{\kappa} (2R - \tfrac r 2)}  \min_{y \in
            \BY \cap B_x(R)}  \Vol(B_y(r/2)).
        \end{equation}
        Combining \eqref{eq:subset} and \eqref{eq:doublingcovering}, we get
        that, up to maybe choosing a larger $C$,
        \begin{equation}
            \#(\BY \cap B_x(R)) \le C\left(\frac{R}{r}\right)^d e^{2(d-1) \sqrt{\kappa} R}.
            \end{equation}
  	\end{proof}
    We note that on spaces that satisfy the doubling inequality \eqref{eq: weak volume
    doubling} the thickness property of Definition \ref{def:thickness} is
    well-behaved, in the sense that if a set is thick at scale $R_0$ it is also
    thick at any scale $R > 0$. Furthermore, if $M$ has no compact connected
    component, thickness of a subset $\omega \subset M$ is invariant by removal
    of any compact set in $M$.
  \subsection{Normal covers and good coordinate systems}
  \label{sec:coverings}

We now aim at constructing local coordinate charts that are uniformly controlled
within the classes $\CM(d,\kappa)$. In particular, the aim is that
  \begin{enumerate}
  	\item the metric is controlled only by $d$ and
        $\kappa$ in those charts,
    \item the size of the neighbourhoods on which the charts map depends only
        $\kappa$.
    \end{enumerate} 
    The injectivity radius is conspicuous by its absence from this list of
    requirements. Indeed, if we allow dependence on the injectivity radius as
    well there are many standard choices of coordinate charts which could
    satisfy our requirements depending on the level of regularity required, for
    instance harmonic coordinates or the ``almost linear coordinates'' developed
    by Jürgen Jost \cite{jostkarcher1982ingerman,jost1984harmonicmappingsbook}.
    These coordinates can only be defined on a neighbourhood whose size is
    bounded by the injectivity radius. In the class of examples we consider, the
    injectivity radius is of course everywhere positive, however it may not be
    bounded away from zero, for instance in the case of a hyperbolic manifold
    with cusps.

    In order to address this difficulty, we will instead use harmonic coordinates
    on a local cover of $M$ around $x$; the \emph{normal cover}, and
    it is on this cover that we will perform our analyses. The
    injectivity radius of the normal cover depends only on $\kappa$,
    so that the harmonic coordinates will be defined on balls whose radius is
    uniformly controlled by $\kappa$.  
     The price to pay is that the number of pre-images of a point in
    the normal cover may grow as the injectivity radius goes to zero, but it
    does so in a quantifiable way which lets us control estimates appropriately.

The exponential map provides a cover of the connected component of $M$
containing $x$ by $T_xM$, whose metric is given by the pullback $\exp_x^* g$.
However, this metric is usually not well-behaved on the whole of $T_xM$, and it is
for this reason that we will define it on a smaller ball. The first step is to
show that there is a ball in $T_xM$ in which the injectivity radius of $\exp_x^*
g$ is uniformly controlled.
\begin{proposition}
    \label{prop:rauch}
    Let $\kappa > 0$. There is $L > 0$ such that for every $d \in \N$, $M \in
    \CM(d,\kappa)$ and $x \in M$, the restriction
    \begin{equation}
        \exp_x : B_{T_xM}(0, \tfrac{\pi}{2 \sqrt \kappa}) \to M
    \end{equation}
    is locally bi-$L$-Lipschitz. Furthermore, in the metric $\exp_x^* g$, every $v \in
    B_{T_xM}(0,\tfrac{\pi}{4
    \sqrt \kappa})$ has injectivity radius larger than $\tfrac{\pi}{4
    \sqrt{\kappa}}$.
\end{proposition}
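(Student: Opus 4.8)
The plan is to express $d(\exp_x)_v$ through Jacobi fields and bound it by Rauch comparison for the first assertion, and then to read the injectivity radius of $h:=\exp_x^{*}g$ off two classical mechanisms --- a conjugate-radius lower bound and the absence of short geodesic loops --- while using the Euclidean radial structure of the normal cover to keep all constants sharp.

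First I would record that $K\le\kappa$ alone forbids conjugate points before distance $\pi/\sqrt\kappa$: along a geodesic $\gamma:[0,\ell]\to M$ with $\ell<\pi/\sqrt\kappa$ and a normal field $V$ vanishing at the endpoints, the index form satisfies $I(V,V)\ge\int_0^\ell\bigl(|V'|^2-\kappa|V|^2\bigr)\de t$, which is positive definite because the first Dirichlet eigenvalue of $-\partial_t^2-\kappa$ on $[0,\ell]$ equals $(\pi/\ell)^2-\kappa>0$; hence $\exp_x$ is an immersion on $\tilde B_x(\pi/\sqrt\kappa)$ and $h$ is a genuine smooth metric there. For the bi-Lipschitz bound, split $w\in T_xM$ into its part along $v$ and a part $w^{\perp}\perp v$; by the Gauss lemma the first is carried isometrically by $d(\exp_x)_v$, while $d(\exp_x)_v(w^{\perp})=J(1)$ for the normal Jacobi field $J$ along $t\mapsto\exp_x(tv)$ with $J(0)=0$, $J'(0)=w^{\perp}$. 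Rauch comparison against the space forms of curvature $\pm\kappa$ (legitimate precisely because conjugate points have been excluded) gives
\[
\frac{\sin(\sqrt\kappa\,|v|)}{\sqrt\kappa\,|v|}\,|w^{\perp}|\ \le\ |J(1)|\ \le\ \frac{\sinh(\sqrt\kappa\,|v|)}{\sqrt\kappa\,|v|}\,|w^{\perp}|,
\]
and on $\tilde B_x(\pi/(2\sqrt\kappa))$, where $\sqrt\kappa\,|v|<\pi/2$, both ratios lie in $[2/\pi,\ \tfrac2\pi\sinh(\pi/2)]$; combining with the orthogonal radial part yields $\tfrac2\pi|w|\le|d(\exp_x)_v w|_{g}\le\tfrac2\pi\sinh(\tfrac\pi2)|w|$, so $\exp_x$ is locally bi-$L$-Lipschitz with the absolute constant $L=\pi/2$ (by scaling, $L$ can in fact be taken independent of $\kappa$ and $d$).

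For the injectivity radius, fix $v\in\tilde B_x(\pi/(4\sqrt\kappa))$ and work in $(D,h)$ with $D:=\tilde B_x(\pi/(2\sqrt\kappa))$. Since $\exp_x:(D,h)\to(M,g)$ is a local isometry, $|K_h|\le\kappa$, so the conjugate-radius bound gives that no $h$-geodesic of length $<\pi/\sqrt\kappa$ carries a conjugate point. The structural input is that the $h$-geodesics from the centre $0$ are exactly the Euclidean rays, so by the Gauss lemma every path in $D$ has $h$-length at least the increment of its Euclidean radius; hence $r:=d_h(0,\cdot)$ equals $|\cdot|_{g_x}$ on all of $D$. In particular $d_h(v,\partial D)\ge\pi/(2\sqrt\kappa)-|v|_{g_x}>\pi/(4\sqrt\kappa)$, so the $h$-geodesics of length $<\pi/(4\sqrt\kappa)$ issuing from $v$ stay in $D$, $\exp^{h}_{v}$ is a well-defined local diffeomorphism on $\tilde B^{h}_{v}(\pi/(4\sqrt\kappa))$, and this ball is relatively compact in $D$. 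To get injectivity, the Hessian comparison $\Hess^{h}r\ge\sqrt\kappa\cot(\sqrt\kappa\,r)\,(h-\de r\otimes\de r)>0$ on $D$ shows $r$ is strictly convex there, so each sublevel ball $\tilde B_x(\rho)$ with $\rho<\pi/(2\sqrt\kappa)$ is strongly convex --- strict convexity of the defining function together with the absence of conjugate points makes the geodesic joining two of its points unique --- and a strongly convex ball contains no geodesic loop (split such a loop at an interior point into two geodesics with common endpoints and invoke uniqueness). A Klingenberg-type first-variation argument, valid here because the relevant ball is relatively compact (so minimizers inside it exist) and conjugate-point free, then shows that a failure of injectivity of $\exp^{h}_{v}$ on $\tilde B^{h}_{v}(\pi/(4\sqrt\kappa))$ would produce a geodesic loop at $v$ inside $\tilde B_x\bigl(|v|_{g_x}+\pi/(4\sqrt\kappa)\bigr)\subset D$, contradiction. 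Hence $\inj_{h}(v)\ge\pi/(4\sqrt\kappa)$, the inequality being strict since $|v|_{g_x}<\pi/(4\sqrt\kappa)$ strictly leaves room everywhere.

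The hard part is this last step, because it runs Klingenberg's mechanism on the \emph{incomplete} manifold $(D,h)$: what makes it work is the sharp identity $r=|\cdot|_{g_x}$ --- rather than the coarser bi-Lipschitz comparison --- which forces all the $h$-balls in play to be relatively compact in $D$, and one must be careful that the strong convexity of the nested balls genuinely kills the geodesic loop produced by the first variation. An alternative, perhaps cleaner, way to exclude that loop is a Gauss--Bonnet-type estimate: $D$ is simply connected, the loop has length $<\pi/(2\sqrt\kappa)$ and bounds a disk contained in a convex subball, whose area is bounded above by the bi-Lipschitz constant and the isoperimetric inequality and below by $K_h\le\kappa$, and these bounds are incompatible. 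All remaining steps are standard dimension-free comparison-geometry computations.
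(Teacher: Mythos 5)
Your proof is correct and rests on the same two pillars as the paper's --- Rauch comparison for the bi-Lipschitz bound, and geodesic convexity of $\tilde B_x(\tfrac{\pi}{2\sqrt\kappa})$ under $\exp_x^*g$ for the injectivity radius --- except that the paper simply cites Jost's book (Lemmas~2.2.1 and~2.4.2) for both ingredients while you reconstruct the underlying computations. One simplification is available at the final step: once strong convexity of $D$ (hence uniqueness of geodesic segments inside $D$) is established, injectivity of $\exp^h_v$ on $\tilde B^h_v(\tfrac{\pi}{4\sqrt\kappa})$ follows immediately, since two radial $h$-geodesics from $v$ ending at the same point would stay inside $D$ by your distance estimate and would therefore have to coincide; so the Klingenberg loop-production detour, while sound given the relative compactness you carefully establish, is unnecessary, and is indeed bypassed in the paper's phrasing. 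Finally, your suggested Gauss--Bonnet alternative is a genuinely two-dimensional argument and would not cover general $d$, so it should not be offered as an equivalent route.
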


\begin{proof}
BiLipschitzness follows from Rauch's comparison theorem
\cite{rauch1951contributiondifferentialgeometry}, (a modern exposition is found
    in 
\cite[Lemma 2.2.1]{jost1984harmonicmappingsbook}). By \cite[Lemma
2.4.2]{jost1984harmonicmappingsbook}, $B_{T_xM}(0,\tfrac{\pi}{2 \sqrt \kappa})$ is geodesically
convex under the metric pulled back by the exponential map, meaning that any two
points are joined by a unique geodesic segment, directly
implying that the injectivity radius is bounded below by $\tfrac{\pi}{4\sqrt
\kappa}$ on the ball of radius $\tfrac{\pi}{4\sqrt \kappa}$.
\end{proof}
While the exponential map is itself biLipschitz, it does not provide us
with uniform control over the biLipschitz constant of $g$. As first noticed by
DeTurck--Kazdan \cite{deturckkazdan1981regularity}, the right choice of
coordinates for optimal control of the regularity are \emph{harmonic
coordinates}. The following proposition collates statements found in the
discussion before Lemma 2.8.1, and in Lemma 2.8.2 of
\cite{jost1984harmonicmappingsbook} applied to $T_xM$.
  \begin{proposition}\label{prop:good_charts}
      Let $\kappa>0$ and $d \in \mathbb{N}$. There is $L, \rho > 0$ and $r_\kappa > 0$, called the
      \emph{uniform Lipschitz harmonicity radius} so that for every $M \in \CM(d,\kappa)$
      and $x \in M$, there is a harmonic $\phi : \tilde B_x(r_\kappa) \to
      \tilde B_{x}(\tfrac{\pi}{4 \sqrt \kappa})$, diffeomorphic on its image so
      that $\tilde g := \phi^* \exp_x^* g$ is biLipschitz with $\blip(\tilde g) \le
      L$. Moreover, $\tilde g$ is the canonical metric at $0$, and the image of
      $\phi$ contains $B_{T_xM}(0,\rho)$.
  \end{proposition}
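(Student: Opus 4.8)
The plan is to run the classical construction of harmonic coordinates with curvature-controlled metric estimates (DeTurck--Kazdan \cite{deturckkazdan1981regularity}; in the form we use, \cite[Lemmas~2.8.1 and~2.8.2]{jost1984harmonicmappingsbook}) not on $M$ itself but on the normal cover $N_x := (\tilde B_x(\tfrac{\pi}{2\sqrt\kappa}), \exp_x^* g)$, where Proposition~\ref{prop:rauch} supplies the injectivity-radius lower bound that construction requires, depending only on $\kappa$. Concretely: since $\exp_x$ is a local diffeomorphism on $\tilde B_x(\tfrac{\pi}{2\sqrt\kappa})$, the metric $\exp_x^* g$ has at every point the same sectional curvature as $g$ at the image point, so $\|K_{\exp_x^* g}\|_{\RL^\infty}\le\kappa$; and by the second assertion of Proposition~\ref{prop:rauch}, every $v\in \tilde B_x(\tfrac{\pi}{4\sqrt\kappa})$ has $\exp_x^* g$-injectivity radius at least $\tfrac{\pi}{4\sqrt\kappa}$. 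Thus the origin $0\in T_xM$ sits, together with a geodesic ball around it whose radius depends only on $\kappa$, inside a Riemannian manifold whose two-sided sectional-curvature bound and injectivity-radius lower bound depend only on $\kappa$.

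I would then invoke the harmonic coordinate construction in exactly this situation. It produces a radius $r=r(d,\kappa)>0$, a geodesic ball $B_{\exp_x^* g}(0,r)\subset N_x$, and a diffeomorphism $\psi$ of it onto an open neighbourhood of the origin in $\R^d$ whose components are $\Delta_{\exp_x^* g}$-harmonic, such that the metric read in these coordinates, $\tilde g:=(\psi^{-1})^*\exp_x^* g$, is uniformly elliptic and has $C^{1,\alpha}$-bounded (hence in particular $C^1$-bounded) coefficients, with all constants depending only on $d$ and $\kappa$. Composing $\psi$ with a linear isomorphism, which simultaneously serves to identify the coordinate space with $T_xM$, we may further arrange that $\tilde g$ agrees with the canonical metric at $0$. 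The bound $\blip(\tilde g)\le L$ for some $L=L(d,\kappa)$ is then immediate: its $\RW^{1,\infty}$-part is the $C^1$-bound on the $\tilde g_{jk}$, and its $\RL^\infty$-part for $\tilde g^{-1}$ is the lower ellipticity bound.

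It remains to set $\phi:=\psi^{-1}$ and shrink its domain. Combining the bi-Lipschitz bound for $\exp_x$ from Proposition~\ref{prop:rauch} with the uniform ellipticity of $\tilde g$, geodesic distances in $N_x$, Euclidean distances in $T_xM$, and Euclidean distances in the coordinate domain are pairwise comparable, with constants depending only on $d$ and $\kappa$; hence $\phi$ is bi-Lipschitz with such constants and fixes $0$. Choosing $r_\kappa=r_\kappa(d,\kappa)>0$ small enough then ensures all at once that $\tilde B_x(r_\kappa)$ lies in the image of $\psi$ (so $\phi$ is well defined and diffeomorphic on $\tilde B_x(r_\kappa)$), that $\phi(\tilde B_x(r_\kappa))\subset B_{\exp_x^* g}(0,r)\subset \tilde B_x(\tfrac{\pi}{4\sqrt\kappa})$, and that $\phi(\tilde B_x(r_\kappa))$ contains a fixed Euclidean ball $\tilde B_x(\rho)$ with $\rho=\rho(d,\kappa)>0$; these are precisely the asserted properties.

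I expect the only genuine subtlety — everything else being bookkeeping on top of the cited estimates — to be ensuring that no constant secretly depends on $\inj(M)$ or on the point $x$, which is exactly what passing to the normal cover buys, since on $N_x$ the injectivity radius at $0$ is bounded below by $\tfrac{\pi}{4\sqrt\kappa}$. A second, purely technical, point is that $N_x$ is incomplete, so one must check that the harmonic coordinate construction — which solves a Dirichlet problem on a small geodesic ball about $0$ — has enough room; this is guaranteed by the gap between the radii $\tfrac{\pi}{4\sqrt\kappa}$ and $\tfrac{\pi}{2\sqrt\kappa}$ appearing in Proposition~\ref{prop:rauch}, after taking $r$ small in terms of $d$ and $\kappa$.
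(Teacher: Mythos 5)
Your proposal matches the paper's approach, which is simply to cite the DeTurck--Kazdan/Jost harmonic-coordinate estimates (Lemmas~2.8.1--2.8.2 of Jost's book) applied to $T_xM$ with the pullback metric, using Proposition~\ref{prop:rauch} to supply the requisite injectivity-radius lower bound on the normal cover. The paper's "proof" is essentially one sentence of attribution; you have correctly reconstructed and fleshed out the underlying argument, including the key point that passing to the normal cover trades dependence on $\inj(M)$ for dependence on $\kappa$ alone, the verification that curvature bounds are inherited by the pullback, the normalisation $\tilde g(0)=\mathrm{Id}$ via a linear change of coordinates, and the incompleteness caveat resolved by the radius gap $\pi/(4\sqrt\kappa)<\pi/(2\sqrt\kappa)$.
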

  Of course, the harmonic radius usually depends on the injectivity radius of
  the manifold, but as shown in Proposition \ref{prop:rauch}, the injectivity
  radius on the tangent space can be bounded below in terms of the curvature.
  The normal cover at a point is simply a ball in the tangent space
  whose radius is slightly smaller than the harmonic radius.
  \begin{definition}\label{def:normal_cover}
      Let $\kappa>0$, $d\in \mathbb{N}$, $M\in
      \mathcal{M}(d,\kappa)$, and $x\in M$. The \emph{normal cover}
    of $M$ at $x$ is $\tilde M_x := \tilde B_x(r_\kappa)$, endowed
    with the pullback metric $\tilde g := \phi^* \exp_x^* g$ expressed in
    harmonic coordinares, and natural projection
    $p_x : \tilde M_x \to M$. 
    For every
    quantity $Q$ defined in a neighbourhood of $x$ (e.g. a subset, a function,
    the metric, etc.) we denote $\tilde Q$ to be the lift to $\tilde M_x$ of
    $Q$,
  \end{definition}

  While the injectivity radius of $\tilde M_x$ is bounded away from zero
  independently of $x$, the
  degree of the covering $\tilde M_x$ grows unboundedly as $x$ varies in a region where the
  injectivity radius of $M$ goes to zero. It turns out that as long as the
  projection of balls centred at $0$ in $\tilde M_x$ is distributed uniformly
  enough on the base space we can recover enough control. In order to make this quantitative, we introduce the
  following two quantities.
  \begin{definition}
      \label{def:counting}
      For any $x \in M$ and $r \in (0,\tfrac{r_\kappa}{4})$, we define
      the upper pre-image counting function
    \begin{equation}
    	\label{eq: upper preimage counting function}
        N(x,r) := \max_{y \in B_x(r)} \# (p_x^{-1}(y) \cap \tilde B_{x}(r))
    \end{equation}
    and the lower pre-image counting function
    \begin{equation}
    	\label{eq: lower preimage counting function}        n(x,r) := \min_{y \in B_x(r)} \# (p_x^{-1}(y) \cap \tilde B_{x}(4r))
    \end{equation}
  \end{definition}

\begin{proposition}\label{prop:counting}
For every $\kappa>0$ and $d\in \mathbb{N}$ there is $C > 0$ so that
the pre-image counting functions are equivalent in the sense that.
\begin{equation}
    N(x,r) \le n(x,r) \le Ce^{3(d-1)\sqrt \kappa r} N(x,r),
\end{equation}
Furthermore, the pre-image counting functions satisfy the doubling estimates
\begin{equation}
    N(x,2r) \le C e^{7(d-1) \sqrt \kappa r} N(x,r) \qquad \text{ and } \qquad
    n(x,2r)
    \le C e^{7(d-1) \sqrt \kappa r} N(x,r).
\end{equation}
\end{proposition}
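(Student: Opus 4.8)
The plan is to reduce everything to the volume doubling inequality of Proposition \ref{prop:vol_db} via a careful ``sandwiching'' of the pre-image counts between volumes of balls on $M$ and on the normal cover $\tilde M_x$. The key point is that, because $\exp_x$ (and hence $p_x$) is a local bi-$L$-Lipschitz map by Proposition \ref{prop:rauch}, the fibre $p_x^{-1}(y)$ is a discrete set that is uniformly separated: there is $r_0 = r_0(\kappa,d) > 0$ such that distinct preimages of any point are at least $r_0$ apart in $\tilde M_x$ (since $\tilde M_x$ has injectivity radius $\geq \tfrac{\pi}{4\sqrt\kappa}$ away from its boundary, two preimages of the same base point cannot be joined by a short path without contradicting injectivity of $\exp_x$ on small balls). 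Moreover, $p_x$ restricted to a ball of radius $\lesssim r_0$ is a bi-Lipschitz diffeomorphism onto its image, so $\Vol_{\tilde g}$ on $\tilde M_x$ and $\Vol_g$ on $M$ are comparable at this scale with constants depending only on $L$.

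First I would establish the comparison $N(x,r) \le n(x,r)$, which is immediate from the definitions since $\tilde B_x(r) \subset \tilde B_x(4r)$ and the minimum over a larger fibre-count region can only be made smaller while... — more carefully: for the $y$ achieving the max in $N(x,r)$ we have $\#(p_x^{-1}(y)\cap \tilde B_x(r)) \le \#(p_x^{-1}(y)\cap \tilde B_x(4r))$, and since that same $y$ lies in $B_x(r)$ it is one of the competitors for the min defining $n(x,r)$; but the min is over \emph{all} $y\in B_x(r)$, so this does not directly give the inequality. Instead I would argue via volumes: for any $y \in B_x(r)$,
\begin{equation}
    \#(p_x^{-1}(y) \cap \tilde B_x(4r)) \cdot c_1 \, r_0^d \;\le\; \Vol_{\tilde g}(\tilde B_x(4r + r_0)) \;\le\; C\, \Vol_g(B_x(5r)),
\end{equation}
using disjointness of balls of radius $r_0/2$ around distinct preimages and local bi-Lipschitzness; and conversely, since $p_x$ is surjective onto $B_x(r)$ and each point has at most $N(x,5r)$-ish preimages,
\begin{equation}
    \Vol_g(B_x(r)) \;\le\; C\, \Vol_{\tilde g}(\tilde B_x(r)) \;\le\; C\, \big(\max_y \#(p_x^{-1}(y)\cap \tilde B_x(r))\big)\cdot \Vol_g(B_x(2r)) \cdot C.
\end{equation}
Chaining these with Proposition \ref{prop:vol_db} to absorb the radius ratios $4r/r$, $5r/r$ etc.\ into constants of the form $Ce^{(d-1)\sqrt\kappa r}$ yields both $N \le n \le Ce^{3(d-1)\sqrt\kappa r}N$ and the two doubling estimates, where the exponents $3$ and $7$ come from bookkeeping how many times Proposition \ref{prop:vol_db} is invoked (each use on radii $R>r$ contributing $e^{(d-1)\sqrt\kappa(R-r)}$, and the radii involved are bounded multiples of $r$).

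The main obstacle I expect is the \emph{lower} bound on the separation $r_0$ of fibres in $\tilde M_x$ — i.e.\ showing distinct preimages of a base point are uniformly far apart — and relatedly, pinning down that $p_x$ is genuinely a covering-type map on the relevant scales so that the volume comparisons ``$\Vol_g(B) \le C\,\Vol_{\tilde g}(\tilde B)$'' and its reverse hold with the right multiplicities $N(x,\cdot)$. This hinges on Proposition \ref{prop:rauch}: the bi-$L$-Lipschitz bound controls distortion, and the injectivity radius bound $\tfrac{\pi}{4\sqrt\kappa}$ on $\tilde B_x(\tfrac{\pi}{4\sqrt\kappa})$ prevents two sheets from coming too close. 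Once $r_0 = r_0(d,\kappa)$ is fixed, everything else is a routine ball-packing plus Bishop--Gromov computation, with the caveat that one must keep the radii $r, 4r, 5r, \dots$ all below $\tfrac{r_\kappa}{4}$ (or adjust constants on the complementary regime where $r$ is comparable to $r_\kappa$, on which $N$ and $n$ are trivially bounded above and below by dimensional constants). I would also double-check the stated exponents $3$ and $7$ against the actual chain of inequalities and, if they come out larger, either re-optimize the chaining or note that the proposition holds with the constants as written after enlarging $C$.
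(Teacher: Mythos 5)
Your proposal hinges on a claim that is false: that the fibres $p_x^{-1}(y)$ in the normal cover $\tilde M_x$ are $r_0(d,\kappa)$-separated for some $r_0$ depending only on $d$ and $\kappa$. The separation of two distinct preimages $\tilde y_1,\tilde y_2\in p_x^{-1}(y)$ is (at least twice) the injectivity radius of $M$ at $y$, because the geodesic in $\tilde M_x$ from $\tilde y_1$ to $\tilde y_2$ projects to a geodesic loop at $y$ of the same length. This can be arbitrarily small within $\CM(d,\kappa)$ — take a flat torus $\R^d/(\ell\Z\times\Z^{d-1})$ with $\ell\to 0$, or a hyperbolic cusp — and avoiding exactly this dependence on the injectivity radius of $M$ is the whole point of the construction. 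The large injectivity radius of $\tilde M_x$ only controls $\exp^{\tilde M_x}_{\tilde y}$; it says nothing about how far apart two sheets of the covering map $p_x$ lie. Consequently your ball-packing inequality $\#(p_x^{-1}(y)\cap\tilde B_x(4r))\cdot c_1 r_0^d\le \Vol_{\tilde g}(\tilde B_x(4r+r_0))$ fails with any metric-independent $r_0$, and the entire ``sandwich'' collapses.

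There is a second, smaller gap: a volume comparison can never give the inequality $N(x,r)\le n(x,r)$ with constant exactly $1$; at best it gives $n\ge cN$ for some $c\in(0,1)$ (which, it should be said, would still suffice for the later application in Proposition~\ref{prop:LM_smallballs}, at the cost of worsening $\eps$). The correct, cheap argument for $N\le n$ is path lifting: if $y$ attains the max in $N(x,r)$ and $z$ is arbitrary in $B_x(r)$, lift a path of length $\le 2r$ from $y$ to $z$ starting at each of the $N(x,r)$ preimages of $y$ in $\tilde B_x(r)$; the endpoints are $N(x,r)$ distinct preimages of $z$ lying in $\tilde B_x(3r)\subset\tilde B_x(4r)$. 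For the remaining bounds, what replaces your ball-packing is the observation that $p_x$ is a local Riemannian \emph{isometry} (the metric $\tilde g$ is exactly the pullback), so $\Vol(\tilde B_x(\rho))=\int_{M}\#\bigl(p_x^{-1}(y)\cap\tilde B_x(\rho)\bigr)\,\dd\Vol(y)$ identically — no packing constant needed. From this one reads off $n(x,r)\le \Vol(\tilde B_x(4r))/\Vol(B_x(r))$ and $N(x,r)\ge \Vol(\tilde B_x(r))/\Vol(B_x(r))$, and a single application of Proposition~\ref{prop:vol_db} on $\tilde M_x$ finishes the comparability and doubling estimates. Your general instinct — reduce to Bishop--Gromov — is right, but the mechanism by which counting becomes volume must be the local-isometry identity, not packing.
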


  \begin{proof}
      Consider $y,z \in B_x(r)$ such that $y$ is maximizing in \eqref{eq: upper preimage counting function} and $z$ is minimizing in \eqref{eq: lower preimage counting function}. Let $\gamma$ be a path
    of length at most $2r$ connecting $y$ and $z$ and staying in
    $B_x(r)$. The lifts $\tilde \gamma$ of $\gamma$ have the same length and connect
    injectively preimages $\tilde y \in p_x^{-1}(y)$ and $\tilde z \in p_x^{-1}(z)$.
    Assuming that $\tilde y \in \tilde B_x(r)$, the triangle inequality gives us
    $\tilde z \in \tilde B_x(4r)$, so that indeed $N(x,r) \le n(x,r)$.
    Now, we observe that taking $y = x$ in \eqref{eq: upper preimage counting function} and \eqref{eq: lower preimage counting function} yields 
    \begin{equation}
        \label{eq:volumecomp}
        n(x,r) \le 
        \frac{\Vol(\tilde B_x(4r))}{\Vol(B_x(r))}
        \quad \text{and}
        \quad N(x,r) \ge 
        \frac{\Vol(\tilde B_x(r))}{\Vol(B_x(r))}.
    \end{equation}
    Combining \eqref{eq:volumecomp} and Proposition \ref{prop:vol_db}, there is
    $C > 0$ so that
    \begin{equation}
        n(x,r) \le \frac{\Vol(\tilde B_x(4r))}{\Vol(\tilde B_x(r))} N(x,r) \le C
        e^{3 (d-1) \sqrt \kappa r} N(x,r).
    \end{equation}
    Similarly, we obtain
    \begin{equation}
        N(x,2r) \le   n(x,2r) \le \frac{\Vol(\tilde B_x(8r))}{\Vol(\tilde B_x(r))} N(x,r) \le C
        e^{7 (d-1) \sqrt \kappa r} N(x,r),
    \end{equation}
    providing the doubling estimates claimed.
  \end{proof}

  \begin{remark}
    The sectional curvature bound was only really used in the Rauch comparison
    theorem to obtain a lower bound on the injectivity radius of the normal
    cover. As indicated in \cite[Theorem
    6]{hebeyherzlich1997harmoniccoordinates}, the uniform Lipschitz harmonic radius can be bounded below in terms
    of the $\RL^\infty$ norm of the Ricci curvature and the injectivity radius,
    so that one can replace sectional curvature with Ricci curvature if there is
    an a priori lower bound on the injectivity radius of the normal cover. Note,
    however, that by the same Theorem, we cannot extend this to lower bounds on the Ricci curvature,
    as that would only guarantee bounds on the $\RC^{0,\alpha}$ norm of metric in harmonic
    coordinates, which is not sufficient to later on apply the
    Logunov--Malinnikova propagation of smallness estimate.
  \end{remark}

  We conclude this section with a few remarks concerning
    \emph{a priori} regularity of the metric.
    
    The works
    \cite{rauch1951contributiondifferentialgeometry,jostkarcher1982ingerman,jost1984harmonicmappingsbook},
    and most of the literature built thereupon, assume the Riemannian
    metric $g$ to be smooth in the original chart.

    By density, all arguments laid down in this section extend
    directly to the case of $\RC^2$ metrics with bounded sectional
    curvature: at least locally, these metrics are $\RC^2$ close to
    smooth metrics with a (slightly worse) bound on the sectional
    curvature. Then, geodesic rays have a Lipschitz dependence on $\RC^2$
    variations on the metric, and therefore Proposition
    \ref{prop:rauch} extends to 
    this situation; in conclusion, our construction of normal
    coverings applies to this case. Then, one can use again density
    arguments: after a normal covering, there exists a closeby smooth
    metric with bounded curvature, and in harmonic coordinates for the
    smoothed metric are good enough, the initial metric will be
    controlled in the Lipschitz topology.

    More generally, let $g_1$ be a smooth metric with bounded sectional
    curvature, and let $g_2$ be a Lipschitz metric which is \emph{comparable}
    in the sense that $cg_1\leq g_2\leq Cg_1$ everywhere for some
    $0<c<C$, and which satisfies
    $\|\nabla_{g_1}g_2\|_{\RL^{\infty}}<\infty$, a family of charts on
    coverings for which $g_1$ is controlled in the Lipschitz topology
    will also control $g_2$ in the Lipschitz topology. In conclusion,
    our main results will also apply to $g_2$ (notice that the
    thickness condition is preserved by changing $g_1$ for $g_2$).

    Unfortunately,
    there does not seem to be an easy way to characterise Lipschitz
    metrics satisfying this condition for some $g_1$, as there is no
    notion of sectional curvature at this regularity. A necessary
    condition is the volume doubling estimate, generalising
    Proposition \ref{prop:vol_db}.

	\section{Elliptic tools}
\label{sec:prop-smalln-lipsch}

This section collects the essential tools towards the proof of Theorem
\ref{theo: main result}. First, we recall some classical interior estimates for harmonic functions. Then, we obtain a
propagation of smallness result from the famous Logunov--Malinnikova theorem.

\begin{lemma}
    \label{lem:harmoniclipschitz}
    Let $U \Subset \Omega \Subset \R^d$ and $L > 0$. There
    is $C > 0$ so that for every metric $g$ on $\Omega$ with $\Lip(g) < L$ and any $\Delta_g$-harmonic functions $w : \Omega \to \R$
    we have the bound
    \begin{equation}
        \norm{w}_{\RC^1(U)} \le C\norm{w}_{\RL^2(\Omega)}. 
    \end{equation}
\end{lemma}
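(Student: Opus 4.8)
The plan is to recognise $w$ as a weak solution of a uniformly elliptic divergence-form equation with Lipschitz coefficients, and then run the classical interior regularity machinery, the only real point being that every constant that appears is a function of $d$, $L$ and the pair $U \Subset \Omega$ alone. First I would put the equation in standard form: setting $a^{jk} := \sqrt{|\det g|}\,g^{jk}$, the equation $\Delta_g w = 0$ is, in the weak sense with respect to Lebesgue measure, equivalent to $\partial_j(a^{jk}\partial_k w) = 0$, since the scalar factor $1/\sqrt{|\det g|}$ is bounded and bounded away from $0$. The bound $\Lip(g) < L$ forces the eigenvalues of $g$, hence of $g^{-1}$, to lie in a fixed interval $[c(L),C(L)] \subset (0,\infty)$, so $\det g$ is pinched between two positive constants; together with $\|g\|_{\RW^{1,\infty}(\Omega)} < L$ this shows that $(a^{jk})$ is uniformly elliptic with ellipticity constants depending only on $d$ and $L$, and that $\|a^{jk}\|_{\RW^{1,\infty}(\Omega)} \le C(d,L)$. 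We may also assume $w \in \RH^1_{\loc}(\Omega)$: this is the case of interest, since in the application $w$ is real-analytic in the interior, and in any case a distributional $\RL^2$ solution of an equation with Lipschitz coefficients lies in $\RH^1_{\loc}$.

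Next I would fix intermediate open sets $U \Subset \Omega_N \Subset \dots \Subset \Omega_1 \Subset \Omega_0 := \Omega$, with $N = N(d)$ to be chosen, and chain the standard interior estimates. (a) By the Caccioppoli inequality, $\|\nabla w\|_{\RL^2(\Omega_1)} \le C\|w\|_{\RL^2(\Omega)}$, and by De Giorgi--Nash--Moser local boundedness (Moser iteration) for divergence-form equations with bounded measurable coefficients, $\|w\|_{\RL^\infty(\Omega_1)} \le C\|w\|_{\RL^2(\Omega)}$, with $C = C(d,L,\Omega_1,\Omega)$; in particular $\|w\|_{\RL^p(\Omega_1)} \le C\|w\|_{\RL^2(\Omega)}$ for every $p < \infty$. (b) Since the coefficients $a^{jk}$ are Lipschitz, Nirenberg's difference-quotient method gives $w \in \RH^2_{\loc}$ with $\|w\|_{\RH^2(\Omega_2)} \le C\|w\|_{\RL^2(\Omega)}$, and $w$ then solves, in non-divergence form, $a^{jk}\partial_j\partial_k w + (\partial_j a^{jk})\partial_k w = 0$ a.e., an equation with continuous leading coefficients and bounded lower-order coefficient. (c) I would then bootstrap the Calder\'on--Zygmund $\RW^{2,p}$ interior estimate along the Sobolev chain: starting from $\nabla w \in \RL^2_{\loc}$ one gets successively $w \in \RW^{2,p_k}_{\loc}$ and $\nabla w \in \RL^{p_{k+1}}_{\loc}$ with $p_{k+1} = dp_k/(d-p_k)$ (any finite exponent once $p_k \ge d$), and after $N = N(d)$ steps one reaches $p > d$, whence $\RW^{2,p}(\Omega_N) \hookrightarrow \RC^{1,\alpha}(\overline{U})$ and $\|w\|_{\RC^1(U)} \le C\|w\|_{\RL^2(\Omega)}$; at each step the $\RW^{2,p}$ constant and the right-hand side are controlled by $d$, $L$ and the sets, using step (a) to handle the zeroth-order term $\|w\|_{\RL^p}$. (d) Finally, any $U \Subset \Omega$ is covered by finitely many balls whose concentric doubles are compactly contained in $\Omega$, so a routine covering argument reduces the general case to the nested-ball situation and absorbs everything into a single constant $C(d,L,U,\Omega)$.

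The genuinely non-mechanical step is the first one: verifying that the passage from $\Lip(g) < L$ to the operator $\partial_j(a^{jk}\partial_k \cdot)$ yields ellipticity and Lipschitz bounds expressible through $d$ and $L$ only, so that the textbook constants in (a)--(c) are themselves functions of $d$ and $L$. Everything after that is the standard De Giorgi--Nash--Moser / Calder\'on--Zygmund bootstrap (see, e.g., the interior estimates in Gilbarg--Trudinger, Chapters~8--9). One could shorten (b)--(c) by invoking directly the interior $\RC^{1,\alpha}$ regularity for divergence-form equations with Lipschitz (in fact Dini-continuous) coefficients, but carrying out the explicit bootstrap has the advantage of making the quantitative, injectivity-radius-free dependence of the constant completely transparent, which is exactly what is needed downstream.
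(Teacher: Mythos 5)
Your argument is correct and follows essentially the same route as the paper's: the interior $\RW^{2,p}$ estimate (Gilbarg--Trudinger, Theorem~9.11) iterated along a Sobolev chain and finished off by Morrey's embedding, with all constants traced back to $d$, $L$ and the nesting $U \Subset \Omega$. The extra scaffolding you add---checking that $\Lip(g)<L$ yields uniform ellipticity and a Lipschitz coefficient bound in terms of $d,L$ alone, the De Giorgi--Nash--Moser $\RL^\infty$ bound, and the Nirenberg difference-quotient step---is sound, and the last of these in fact supplies the a priori $\RW^{2,2}_{\loc}$ regularity that Theorem~9.11 formally presupposes and that the paper's write-up leaves implicit.
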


\begin{proof}
    Let $U_1$ be such that $U \Subset U_1 \Subset \Omega$. It follows from \cite[Theorem 9.11]{gilbargtrudinger2001bookelliptic} that 
     for all $p \in
    (1,\infty)$,
    \begin{equation}
        \label{eq:gilbargtrudingerelliptic}
        \|w\|_{\RW^{2,p}(U_1)}
        \lesssim_{U,\Omega,p,d,\Lip(g)}
        \|w\|_{\RL^p(\Omega)},
    \end{equation}
    and, applying the Sobolev embedding theorem with $p = 2$ to $U_1$ to see that
    \begin{equation}
        \label{eq:gtaftersob}
        \|w\|_{\RL^{\tfrac{2d}{d-4}}(U_1)}
        \lesssim_{U,\Omega,d,\Lip(g)}
         \|w\|_{\RL^2(\Omega)}.
    \end{equation}
    Let $k = \lceil \tfrac{d-2}{4}\rceil$, and consider a sequence of sets such
    that $U \Subset U_k
    \Subset \dotso \Subset U_1 \Subset \Omega$. Applying recursively
    inequalities \eqref{eq:gilbargtrudingerelliptic} and \eqref{eq:gtaftersob}
    to pairs $U_j, U_{j-1}$ we have that for some $q > d$,
    \begin{equation}
        \|w\|_{\RL^q(U_k)} 
        \lesssim_{U,\Omega,d,\Lip(g)}
        \|w\|_{\RL^2(\Omega)}.
    \end{equation}
    Applying \eqref{eq:gilbargtrudingerelliptic} one last time between $U$ and
    $U_k$, but this time
    followed by the Morrey embedding theorem, then reveals
    \begin{equation}
        \|w\|_{\RC^1(U)} \lesssim_{U, \Omega, d, \Lip(g)} \|w\|_{\RL^2(\Omega)}.
    \end{equation}
\end{proof}

We now deduce appropriate propagation of smallness estimates from those of
Logunov--Malinnikova \cite{logunovmalinnikova2018qtttvepropagsmallness}. We
start by deducing a local $\RL^2$ estimate.
\begin{proposition}\label{prop:LML2}
    Let $d\geq 2$ and let $\Omega \Subset U \Subset \R^d$, and $T,\epsilon, L>0$. There exists $C > 0$ and, 
    $\alpha \in (0,1)$
    such that every locally Lipschitz metrics $g$ on $U$ with $\Lip(g) < L$,
    and all pairs $u : \Omega
    \to \R$ and $w:U
    \times [-T,T]\to \mathbb{R}$
	solving
    \begin{equation}
        \begin{cases}
        (\Delta_g + \partial_t^2)w = 0 & \text{in } U \times [-T,T] \\
        w(x,0) \equiv 0  & \text{and } \del_t w(x,0) = u(x) 
        \end{cases}
    \end{equation}
satisfy
	\[
	\|u\|_{\RL^2(\Omega)}\leq
    C\|u\|_{\RL^2(\omega)}^{\alpha}\|w\|_{\RL^2(U \times [-T,T])}^{1-\alpha}
	\]
    for every $\omega \subset \Omega$ such that $\CH^d(\omega) > \eps$.
\end{proposition}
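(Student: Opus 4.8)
The plan is to read $w$ as a harmonic function for a product metric on a $(d+1)$-dimensional domain and then apply the codimension-one propagation of smallness for gradients of Logunov--Malinnikova on the slice $\{t=0\}$, on which the gradient of $w$ is essentially $u$. Set $n=d+1$ and equip $U\times(-T,T)\subset\R^{n}$ with the product metric $\hat g=g\oplus\dd t^{2}$; since $\det\hat g=\det g$ is independent of $t$, one has $\Delta_{\hat g}=\Delta_g+\partial_t^{2}$, so $w$ solves $\Delta_{\hat g}w=0$, a uniformly elliptic divergence-form equation whose ellipticity and Lipschitz constants are bounded in terms of $L$ alone. By Lemma~\ref{lem:harmoniclipschitz} (used in dimension $n$), $\nabla w$ is then continuous on $U\times(-T,T)$, and for any fixed open $V$ with $\overline{\Omega}\times\{0\}\subset V\Subset U\times(-T,T)$ one has the interior bound $\sup_{V}|\nabla w|\le C\,\|w\|_{\RL^{2}(U\times[-T,T])}$ with $C=C(V,U,T,d,L)$.

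Two further observations set up the application of Logunov--Malinnikova. Since $w(\cdot,0)\equiv0$ on $U$, all tangential derivatives of $w$ vanish along $\{t=0\}$, so for every $x\in\Omega$ we have $\nabla_{\hat g}w(x,0)=(0,\dots,0,\partial_t w(x,0))$ and hence $|\nabla_{\hat g}w(x,0)|_{\hat g}=|u(x)|$ (by uniform ellipticity the Euclidean gradient at $(x,0)$ is comparable to $|u(x)|$ with a constant depending only on $L$). Moreover, since the Logunov--Malinnikova inequality carries a pointwise supremum of $|\nabla w|$ over the trace set, I would first pass from an $\RL^{2}$ bound to a pointwise one by Chebyshev: the set $\omega':=\{x\in\omega:\ |u(x)|\le 2\,\CH^{d}(\omega)^{-1/2}\|u\|_{\RL^{2}(\omega)}\}$ satisfies $\CH^{d}(\omega')\ge\tfrac34\CH^{d}(\omega)>\tfrac34\eps$ while $\sup_{\omega'}|u|\le 2\eps^{-1/2}\|u\|_{\RL^{2}(\omega)}$.

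Now cover $\overline{\Omega}\times\{0\}$ by finitely many balls whose concentric doubles lie in $U\times(-T,T)$, select one in which $\omega'\times\{0\}$ has $\CH^{d}$-measure at least a constant depending only on $(\Omega,U)$ (possible since $\CH^{d}(\omega')>\tfrac34\eps$), apply \cite[Theorem~5.1]{logunovmalinnikova2018qtttvepropagsmallness} to $|\nabla w|$ in a rescaled copy of that ball with the codimension-one trace set $\omega'\times\{0\}$, and propagate the resulting smallness of $|\nabla w|$ through the chain of overlapping balls by a standard propagation-of-smallness argument (e.g.\ the positive-measure version of \cite{logunovmalinnikova2018qtttvepropagsmallness} on the overlaps). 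This produces
\[
\sup_{\Omega}|u|\ =\ \sup_{\Omega\times\{0\}}|\nabla_{\hat g}w|_{\hat g}\ \le\ C\,\Big(\sup_{\omega'}|u|\Big)^{\alpha}\Big(\sup_{V}|\nabla w|\Big)^{1-\alpha}
\]
for some $\alpha\in(0,1)$ and $C$. Combining this with $\sup_{\omega'}|u|\le 2\eps^{-1/2}\|u\|_{\RL^{2}(\omega)}$, with the interior bound $\sup_{V}|\nabla w|\le C\|w\|_{\RL^{2}(U\times[-T,T])}$, and with $\|u\|_{\RL^{2}(\Omega)}\le\CH^{d}(\Omega)^{1/2}\sup_{\Omega}|u|$, and absorbing $2\eps^{-1/2}$ into $C$, gives exactly the claimed inequality.

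The step that needs the most care — and the reason the estimate is uniform in $g$ — is the bookkeeping in this globalization: the number of balls in the covering and their geometry depend only on $(\Omega,U)$, the exponent lost at each chaining step depends only on $(d,L)$, and the constant and exponent produced by Theorem~5.1 depend only on $(d,L)$ and on a lower bound for the relative $\CH^{d}$-measure of the trace set, hence (after the reduction to $\omega'$) only on $(d,\Omega,U,\eps)$; none of them depends on the metric $g$ itself. A secondary point to keep honest is regularity: because $\hat g$ is merely Lipschitz, $w$ is no better than $\RC^{1,\beta}_{\loc}$, so the argument must be (and is) phrased entirely in terms of the continuous field $\nabla w$, never using second derivatives of $w$ up to the slice $\{t=0\}$.
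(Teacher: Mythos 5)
Your proposal is correct and takes essentially the same approach as the paper: both pass to the product metric on $U\times(-T,T)$, identify $|u|$ with $|\nabla w|$ on the slice $\{t=0\}$, prune a small bad set via Chebyshev to convert the $\RL^2$ hypothesis on $\omega$ into a pointwise bound on a subset of comparable measure, invoke the Logunov--Malinnikova codimension-one propagation estimate, and close with the interior $\RC^1$ bound of Lemma~\ref{lem:harmoniclipschitz}. The only cosmetic differences are that you fix the Chebyshev threshold in advance (defining $\omega'$ directly) whereas the paper runs a free-threshold dichotomy on $E_a$ and concludes by contraposition, and that you spell out the ball-covering/chaining step that the paper leaves implicit when applying \cite[Theorem~5.1]{logunovmalinnikova2018qtttvepropagsmallness} to the pair $\Omega\Subset V$.
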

\begin{proof}
    In this proof, we abuse notation by using the same letter for $U \subset
    \R^d $ and $U \times \{0\} \subset \R^d_x \times \R_t$. On $\R^d_x \times
    \R_t$ we put the metric $g' = g_x + \rd t^2$. We note that since $w$ is
    constant on $U$, we have that $|u(x)| = |\nabla w(x,0)|$ for all $x \in U$
    and as such it is that quantity that we estimate. In particular, we assume
    that $\nabla w \not \equiv 0$ on $\Omega$, as otherwise the result holds
    trivially. Let $V$ be such that
    $\Omega \Subset V \Subset U$ and let $\omega \subset
    \Omega$ be such that $\CH^d(\omega) > \eps$. For $a > 0$ put
   $E_a =  \{x \in \omega : |\nabla w(x,0)| \ge a \}$
    so that by Chebyshev's inequality, 
    \begin{equation}
        \label{eq:chebyshev}
        \CH^{d}(E_a)^{1/2} \le
        \frac 1 a\norm{\nabla w}_{\RL^2(\omega)}.
    \end{equation}
    If $\CH^{d}(E_a) < \tfrac \eps 2$, the Logunov--Malinnikova
    propagation estimate \cite[Theorem
    5.1]{logunovmalinnikova2018qtttvepropagsmallness} applied to $\omega
    \setminus E_a \Subset \Omega \Subset V$
    (which then has measure
    larger than $\tfrac \eps 2$) leads us to
    \begin{equation}
        \|\nabla w\|_{\RL^\infty(\Omega)} \le C \|\nabla w\|_{\RL^\infty(\omega
        \setminus E_a)}^\alpha \|\nabla w\|_{\RL^\infty(V)}^{1-\alpha} \le C a^
        \alpha \|\nabla w\|_{\RL^\infty(V \times [-T,T])}^{1-\alpha},
    \end{equation}
    or, rearranging,
    \begin{equation}
        \label{eq:aifEasmall}
        a^\alpha \ge \frac{\|\nabla w\|_{\RL^\infty(\Omega)}}{C\|\nabla w\|^{1 -
        \alpha}_{\RL^\infty(V \times [-T,T])}}.
    \end{equation}
    Contrapositively, if $a$ doesn't satisfy inequality \eqref{eq:aifEasmall}
    then $\CH^{d}(E_a) > \tfrac \eps 2$. Inserting this assumption in \eqref{eq:chebyshev} and
    taking powers of $\alpha$ on both sides gives us
    \begin{equation}
        \|\nabla w\|_{\RL^\infty(\Omega)} \le C
        \left(\frac{2}{\eps}\right)^{\frac \alpha 2} \|\nabla
        w\|_{\RL^2(\omega)}^\alpha \|\nabla w\|_{\RL^\infty(V \times
    [-T,T])}^{1-\alpha}.
    \end{equation}
    By definition, then following from Lemma \ref{lem:harmoniclipschitz}, we see
    that
    \begin{equation}
        \|\nabla w\|_{\RL^\infty(V \times [-T,T])} \le \|w\|_{\RC^1(V \times
        [-T,T])} \lesssim \|w\|_{\RL^2(U \times [-T,T])},
    \end{equation}
    whence the simple application of H\"older's inequality
    \begin{equation}
        \|u\|_{\RL^2(\Omega)} = \|\nabla w\|_{\RL^2(\Omega)} \le
        \Vol(\Omega)^{\tfrac 1 2} \|\nabla w\|_{\RL^\infty(\Omega)}
    \end{equation}
        completes the proof.
\end{proof}

In the remainder of this section, we extend Proposition \ref{prop:LML2} to
manifolds of bounded curvature in $\CM(d,\kappa)$. We proceed in
three steps, starting with a local estimate in small enough balls then extending
to balls of any fixed radius via a covering argument. Finally, we extend the statement to any
$M \in \CM(d,\kappa)$ under the assumption that $\omega$ is $(R,\eps)$-thick
inside $M$. Recall the
conventions from Definition \ref{def:normal_cover}. Namely, given $(M,g) \in
\CM(d,\kappa)$, the normal cover at $x \in M$ is denoted $\tilde M_x$, we write
$B_x(r) = B_M(x,r)$, and for any quantity $s$ associated to $M$ (be it a function,
a subset, the metric, etc.), we denote $\tilde s$ the associated quantity on
$\tilde M_x$.
\begin{proposition}\label{prop:LM_smallballs}
    Let $d\geq 2$, $\kappa, \eps, T >0$, and $r \in (0, \tfrac{r_\kappa}{4})$. There
    exist $C > 0$ and $\alpha \in (0,1)$ so that every $(M,g) \in
    \CM(d,\kappa)$, $x \in M$, and pairs $u : B_x(\tfrac{r_\kappa}{4}) \to
    \R$ and $w : B_x(\tfrac{r_\kappa}{4}) \times [-T,T]$ solving
    \begin{equation}
        \begin{cases}
            (\Delta_g + \partial_t^2)w = 0 & \text{in } B_x(\tfrac{r_\kappa}{4}) \times [-T,T] \\
        w(y,0) \equiv 0  & \text{ and } \del_t w(y,0) = u(y) 
        \end{cases}
    \end{equation}
    satisfy the estimate
	\[
        \|u\|_{\RL^2(B_x(r))}\leq
        C\|u\|_{\RL^2(\omega)}^{\alpha}\|w\|_{\RL^2(B_x(\tfrac{r_\kappa}{4}) \times [-T,T])}^{1-\alpha}
	\]
    for every $\omega \subset B_x(r)$ such that $\Vol(\omega) > \eps
    \Vol(B_x(r))$.
\end{proposition}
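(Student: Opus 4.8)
The plan is to lift the data to the normal cover $\tilde M_x$, apply Proposition \ref{prop:LML2} there on Euclidean domains fixed once $d,\kappa,r$ are fixed, and push the resulting estimate back down to $M$. The one thing to watch is that the degree of the covering $p_x : \tilde M_x \to M$ can be arbitrarily large when the injectivity radius of $M$ near $x$ is small; the role of the counting functions $N(x,r),n(x,r)$ and of Proposition \ref{prop:counting} is precisely that every multiplicity factor one picks up is comparable to $N(x,r)$ up to a factor depending only on $d,\kappa,r$, so that these factors cancel at the end.

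\emph{Setup.} Set $\tilde u := u \circ p_x$ and $\tilde w := w \circ p_x$ on $p_x^{-1}(B_x(\tfrac{r_\kappa}{4}))$. Since $p_x$ is a local isometry it is distance non-increasing, $\blip(\tilde g) \le L = L(d,\kappa)$ by Proposition \ref{prop:good_charts}, and $\tilde w$ solves $(\Delta_{\tilde g}+\partial_t^2)\tilde w = 0$ there with $\tilde w(\cdot,0)\equiv 0$ and $\partial_t\tilde w(\cdot,0)=\tilde u$. The biLipschitz bound squeezes every $\tilde g$-metric ball $\tilde B_x(\rho)$ between the Euclidean balls of radii $\rho/\sqrt L$ and $\rho\sqrt L$ about $0$, so one can fix \emph{Euclidean} balls $\Omega\Subset U$, depending only on $d,\kappa,r$, with $\tilde B_x(4r)\subseteq\Omega\Subset U\subseteq\tilde B_x(\tfrac{r_\kappa}{4})$ for every $(M,g)\in\CM(d,\kappa)$ and $x\in M$; this is exactly where one needs $r$ to be a small enough multiple of $r_\kappa$ (we keep writing $r<r_\kappa/4$, absorbing the remaining geometric room — also for the geodesic-ball inclusions used below — by fixing $r_\kappa$ small from the outset). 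In particular $U\subseteq p_x^{-1}(B_x(\tfrac{r_\kappa}{4}))$, so the lifted problem is available on $U\times[-T,T]$.

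\emph{Measure transfer.} Let $\tilde\omega := p_x^{-1}(\omega)\cap\tilde B_x(4r)\subseteq\Omega$. Since $p_x$ is a local isometry, $\Vol_{\tilde g}(p_x^{-1}(A)\cap E)=\int_A\#(p_x^{-1}(y)\cap E)\,\dd\Vol_g(y)$ for $A\subseteq p_x(\tilde M_x)$ and $E\subseteq\tilde M_x$ (and likewise with an integrand $|u|^2$). Taking $(A,E)=(\omega,\tilde B_x(4r))$ and $(A,E)=(p_x(\tilde B_x(r)),\tilde B_x(r))$, and invoking Definition \ref{def:counting} together with $p_x(\tilde B_x(r))\subseteq B_x(r)$, gives $\Vol_{\tilde g}(\tilde\omega)\ge n(x,r)\Vol_g(\omega)$ and $\Vol_{\tilde g}(\tilde B_x(r))\le N(x,r)\Vol_g(B_x(r))$. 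Combining these with the hypothesis $\Vol_g(\omega)\ge\eps\,\Vol_g(B_x(r))$ and the bound $n(x,r)\ge N(x,r)$ of Proposition \ref{prop:counting},
\[
\Vol_{\tilde g}(\tilde\omega)\ \ge\ \eps\,\frac{n(x,r)}{N(x,r)}\,\Vol_{\tilde g}(\tilde B_x(r))\ \ge\ \eps\,\Vol_{\tilde g}(\tilde B_x(r))\ \ge\ c(d,\kappa,r)\,\eps,
\]
the last step because $\tilde B_x(r)$ contains a Euclidean ball of radius $r/\sqrt L$; since $\sqrt{\det\tilde g}\le L^{d/2}$ this yields $\CH^d(\tilde\omega)\ge L^{-d/2}c(d,\kappa,r)\eps =: \eps'>0$, uniformly in $M$ and $x$.

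\emph{Conclusion.} Proposition \ref{prop:LML2} on the fixed pair $\Omega\Subset U$, with metric $\tilde g$ ($\Lip\le L$), functions $\tilde u,\tilde w$, and set $\tilde\omega$ ($\CH^d>\eps'$), gives $C,\alpha\in(0,1)$ depending only on $d,\kappa,\eps,r,T$ with $\|\tilde u\|_{\RL^2(\Omega)}\le C\|\tilde u\|_{\RL^2(\tilde\omega)}^\alpha\|\tilde w\|_{\RL^2(U\times[-T,T])}^{1-\alpha}$. Now translate each factor back via the same local-isometry identity and Proposition \ref{prop:counting}. Restricting to $p_x^{-1}(B_x(r))\cap\tilde B_x(4r)\subseteq\Omega$ and using Definition \ref{def:counting} gives $\|\tilde u\|_{\RL^2(\Omega)}^2\ge n(x,r)\|u\|_{\RL^2(B_x(r))}^2\ge N(x,r)\|u\|_{\RL^2(B_x(r))}^2$. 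Over $\omega\subseteq B_x(r)\subseteq B_x(4r)$ the multiplicity inside $\tilde B_x(4r)$ is $\le N(x,4r)\le C_1 N(x,r)$ by the doubling estimate of Proposition \ref{prop:counting}, so $\|\tilde u\|_{\RL^2(\tilde\omega)}^2\le C_1 N(x,r)\|u\|_{\RL^2(\omega)}^2$; and the multiplicity of $U$ over $p_x(U)\subseteq B_x(\tfrac{r_\kappa}{4})$ is bounded by $\#(p_x^{-1}(\cdot)\cap\tilde B_x(\tfrac{r_\kappa}{4}))$, hence by $C_2 N(x,r)$ after iterating $N(x,2\rho)\lesssim N(x,\rho)$, so $\|\tilde w\|_{\RL^2(U\times[-T,T])}^2\le C_2 N(x,r)\|w\|_{\RL^2(B_x(\tfrac{r_\kappa}{4})\times[-T,T])}^2$. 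Plugging these into the inequality from Proposition \ref{prop:LML2}, the powers of $N(x,r)$ combine to $N(x,r)^{-1/2+\alpha/2+(1-\alpha)/2}=1$ and one is left with
\[
\|u\|_{\RL^2(B_x(r))}\ \le\ C\,C_1^{\alpha/2}C_2^{(1-\alpha)/2}\,\|u\|_{\RL^2(\omega)}^\alpha\,\|w\|_{\RL^2(B_x(\tfrac{r_\kappa}{4})\times[-T,T])}^{1-\alpha},
\]
a constant depending only on $d,\kappa,\eps,r,T$. The delicate points are the multiplicity bookkeeping — keeping a \emph{lower} bound $\ge n(x,r)\ (\ge N(x,r))$ for the degree over $B_x(r)$ inside $\tilde B_x(4r)$ while taking \emph{upper} bounds $\lesssim N(x,r)$ for the degrees over $\omega$ and over $p_x(U)$, the compatibility of which is exactly Proposition \ref{prop:counting} — and verifying the uniform Euclidean and geodesic containments, which is what pins down how small $r$ must be relative to $r_\kappa$.
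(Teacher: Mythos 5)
Your proof is correct and follows essentially the same route as the paper's: lift $u,w,\omega$ to the normal cover, bound $\Vol_{\tilde g}(\tilde\omega)$ from below via the chain $n(x,r)\Vol(\omega)\ge\eps N(x,r)\Vol(B_x(r))\ge\eps\Vol(\tilde B_x(r))$, apply Proposition~\ref{prop:LML2} on the cover, and push back down with the counting functions so that the multiplicity factors cancel. The only cosmetic differences are that you spell out the sandwiching of $\tilde g$-balls between fixed Euclidean domains (which the paper's choice $\Omega=\tilde B_x(4r)\Subset U=\tilde B_x(r_\kappa/4)$ tacitly relies on) and you normalize every multiplicity to $N(x,r)$ up front so the powers cancel exactly, whereas the paper carries the ratio $N(x,r_\kappa/4)/n(x,r)$ and bounds it at the end via Proposition~\ref{prop:counting}.
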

\begin{proof}
    Let $\omega
    \subset B_x(r)$ be such that $\Vol(\omega) > \eps \Vol(B_x(r))$.  By
    Proposition \ref{prop:counting},
    \begin{equation}
        \begin{aligned}
            \Vol(\tilde \omega \cap \tilde B_x(4r)) &\ge \Vol(\omega) n(x,r) \\
            &\ge \eps \Vol(B_x(r)) N(x,r)\\
            &\ge \eps \Vol(\tilde B_x(r)).
    \end{aligned}
    \end{equation}
    Proposition \ref{prop:good_charts} tells us that $\Vol(\tilde B_x(r))$ is bounded
    away from zero and that $\Lip(\tilde g)$ is bounded, depending only on $r,
    \kappa$ and $d$. Applying Proposition \ref{prop:LML2}
    with $\Omega = \tilde B_x(4r) \Subset U = \tilde B_x(\tfrac{r_\kappa}{4})$ we obtain $C > 0$ and
    $\alpha \in (0,1)$ so that
	\[
        \|\tilde{u}\|_{\RL^2(\tilde B_x(4r))}\leq C
\|\tilde{u}\|_{\RL^2(\tilde{\omega})}^{\alpha}\|\tilde{w}\|_{\RL^2(\tilde
B_x(\tfrac{r_\kappa}{4})
\times [-T,T])}^{1-\alpha}.\]
Now, a simple counting argument combined with H\"older's inequality gives on the one hand
	\[
        n(x,r)^{\frac 12}\|u\|_{\RL^2(B_x(r))}\leq
        \|\widetilde{u}\|_{\RL^2(\tilde B_x(4r))}
\]
whilst on the other hand
\[
    \|\widetilde{u}\|_{\RL^2(\widetilde{\omega})}\leq N(x,\tfrac{r_\kappa}{4})^{\frac{1}{2}}\|u\|_{\RL^2(\omega)}\quad \text{and}
\quad
\|\widetilde{w}\|_{\RL^2(\tilde B_x(\tfrac{r_\kappa}{4}))}\leq
N(x,\tfrac{r_\kappa}{4})^{\frac 12}\|w\|_{\RL^2(B_x(\tfrac{r_\kappa}{4})
\times [-T,T])},\]
	and therefore
	\[
        \|u\|_{\RL^2(B_x(r))}\le C\left(
            \frac{N(x,\tfrac{r_\kappa}{4})}{n(x,r)}
    \right)^{\frac 12}
    \|u\|_{\RL^2(\omega)}^{\alpha}\|w\|_{\RL^2(B_x(\tfrac{r_\kappa}{4})\times [-T,T])}^{1-\alpha}.
	\]
	Applying repeatedly Proposition \ref{prop:counting}, we find that the ratio $
    \frac{N(x,\tfrac{r_\kappa}{4})}{n(x,r)}$ is bounded depending only on $r, d, \kappa$, which concludes the proof.
\end{proof}

We now propagate this inequality from small to large balls.

\begin{proposition}
	\label{prop: LM_largeballs}
    Let $d \geq 2$ and $\kappa, R, \eps, T > 0$ and put $R' = R +
    r_\kappa$. There exist $C > 0$ and $\beta
    \in (0,1)$ so that for every $(M,g)\in \mathcal{M}(d,\kappa)$, $x\in M$ and
    pairs $u: B_x(R) \to \R$ and $w : B_x(R) \times [-T,T] \to \R$ solving
    \begin{equation}
        \begin{cases}
            (\Delta_g + \partial_t^2)w = 0 & \text{in } B_x(\tfrac{\pi}{8 \sqrt
            \kappa}) \times [-T,T] \\
        w(y,0) \equiv 0  & \text{ and } \del_t w(y,0) = u(y) 
        \end{cases}
    \end{equation}
    we have the estimate
	\[
        \|u\|_{\RL^2(B_x(R))}\leq
C\|u\|_{\RL^2(\omega)}^{\beta}\|w\|_{\RL^2(B_x(R') \times [-T,T])}^{1-\beta}
	\]
    for every $\omega \subset B_x(R)$ such that $\Vol(\omega) > \eps
    \Vol(B_x(R))$.
\end{proposition}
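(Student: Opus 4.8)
The plan is a Harnack-type chaining argument: cover $B_x(R)$ by a controlled number of small balls on which Proposition \ref{prop:LM_smallballs} is available, locate one such ball on which $\omega$ occupies a definite fraction of the volume, propagate the interpolation estimate from that ball to every ball of the cover, and sum up. Fix once and for all $r := \tfrac{r_\kappa}{16}$, so that $r < \tfrac{r_\kappa}{4}$ and $B_y(\tfrac{r_\kappa}{4}) \subset B_x(R+r_\kappa) = B_x(R')$ for every $y \in B_x(R)$; in particular $w$ is $(\Delta_g+\partial_t^2)$-harmonic on each $B_y(\tfrac{r_\kappa}{4})\times[-T,T]$, and, writing $W := \|w\|_{\RL^2(B_x(R')\times[-T,T])}$, every local norm $\|w\|_{\RL^2(B_y(\tfrac{r_\kappa}{4})\times[-T,T])}$ is $\le W$. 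Choose a maximal $\tfrac r8$-separated set $\{x_i\}_{i=1}^N \subset B_x(R)$; by maximality the balls $B_{x_i}(\tfrac r4)$ (hence a fortiori the $B_{x_i}(r)$) cover $B_x(R)$, and Proposition \ref{prop:covering_balls} bounds $N$ in terms of $d,\kappa,R$ only.

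Two consequences of the volume doubling inequality (Proposition \ref{prop:vol_db}) drive the argument. First, if $d(x_i,x_j)\le\tfrac r2$ then $B_{x_i}(\tfrac r2)\subset B_{x_j}(r)$ and $\Vol(B_{x_i}(\tfrac r2)) \ge c\,\Vol(B_{x_j}(r))$ for some $c = c(d,\kappa)>0$, so $B_{x_i}(\tfrac r2)$ is an admissible observation set inside $B_{x_j}(r)$. Second, since $\Vol(B_{x_i}(r)) \le \Vol(B_x(R+r)) \le C\,\Vol(B_x(R))$, the cover property forces $\sum_i \Vol(\omega\cap B_{x_i}(r)) \ge \Vol(\omega) > \eps\,\Vol(B_x(R))$, so some index $i_0$ satisfies $\Vol(\omega\cap B_{x_{i_0}}(r)) > \eps_0\,\Vol(B_{x_{i_0}}(r))$ with $\eps_0 := \eps/(NC)$. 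Put $\eps' := \min(\eps_0,c) \in (0,1)$; it depends only on $d,\kappa,R,\eps$.

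Now the propagation. Let $\alpha \in (0,1)$, depending only on $d,\kappa,R,\eps,T$, be the exponent Proposition \ref{prop:LM_smallballs} produces for the relative-volume threshold $\eps'$ (both observation sets below exceed it). Applied on $B_{x_{i_0}}(r)$ with observation set $\omega\cap B_{x_{i_0}}(r)$, it gives $\|u\|_{\RL^2(B_{x_{i_0}}(r))} \le C\,\|u\|_{\RL^2(\omega)}^{\alpha}\,W^{1-\alpha}$. Because $B_x(R)$ is connected and is covered by the connected sets $B_{x_i}(\tfrac r4)$, the nerve of this cover is connected; hence every index $j$ is joined to $i_0$ by a chain $i_0 = j_0,\dots,j_m = j$ with $m \le N$ and $d(x_{j_k},x_{j_{k+1}}) \le \tfrac r2$. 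At each step, Proposition \ref{prop:LM_smallballs} on $B_{x_{j_{k+1}}}(r)$ with observation set $B_{x_{j_k}}(\tfrac r2)$ gives $\|u\|_{\RL^2(B_{x_{j_{k+1}}}(r))} \le C\,\|u\|_{\RL^2(B_{x_{j_k}}(r))}^{\alpha}\,W^{1-\alpha}$; iterating $m$ times from the base estimate and telescoping the multiplicative constants through the convergent series $\sum_\ell \alpha^\ell$ (the $u$-exponents compound to $\alpha^{m+1}$), we obtain, uniformly in $j$, $\|u\|_{\RL^2(B_{x_j}(r))} \le C'\,\|u\|_{\RL^2(\omega)}^{\alpha^{m+1}}\,W^{1-\alpha^{m+1}}$ with $C'$ bounded in terms of $d,\kappa,R,\eps,T$ only.

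It remains to assemble. The interior estimate of Lemma \ref{lem:harmoniclipschitz}, read in harmonic coordinates on the normal covers exactly as in the proof of Proposition \ref{prop:LM_smallballs} and combined with the Bishop volume bound, yields $\|u\|_{\RL^2(B_x(R))} \le C_0 W$, hence $\|u\|_{\RL^2(\omega)} \le C_0 W$. Since $\alpha^{m+1} \ge \alpha^{N+1} =: \beta$, this last inequality lets us lower every exponent $\alpha^{m+1}$ to the common value $\beta \in (0,1)$ at the cost of a bounded factor, and then $\|u\|_{\RL^2(B_x(R))}^2 \le \sum_{j=1}^N \|u\|_{\RL^2(B_{x_j}(r))}^2$ delivers the stated inequality. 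The main obstacle is conceptual: thickness is a global hypothesis, whereas Proposition \ref{prop:LM_smallballs} demands that $\omega$ fill a fixed fraction of a single small ball — the chaining is precisely the mechanism converting one into the other, the volume doubling inequality supplying the quantitative overlaps and Proposition \ref{prop:covering_balls} keeping the chain length, and hence the final exponent $\beta$, uniform across $\CM(d,\kappa)$.
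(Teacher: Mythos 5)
Your argument is correct and follows essentially the same route as the paper's: cover $B_x(R)$ by small balls (with cardinality controlled by Proposition~\ref{prop:covering_balls}), isolate one ball carrying a definite fraction of $\omega$, chain Proposition~\ref{prop:LM_smallballs} along a connected graph of uniformly bounded length, and sum. The only cosmetic differences are that the paper builds the graph from an explicit edge relation on a $r/2$-separated set rather than the nerve of a $r/8$-cover, and that you spell out the exponent-normalisation step (lowering $\alpha^{m+1}$ to a common $\beta=\alpha^{N+1}$ via $\|u\|_{\RL^2(\omega)}\lesssim W$) which the paper leaves implicit when it invokes $\alpha^{\operatorname{diam}(\BG)}$.
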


\begin{proof}
    Let $r = \tfrac{r_\kappa}{16}$ and $\BY$ be a maximal $r/2$-separated
    set in $B_x(R)$, in particular $B_x(R)$ is covered by balls centred on
    elements of $\BY$ of radius $r/2$. Let $\BG$ be the graph whose set of vertices is $\BY$, and
    where the edges are given by the relation
    \begin{equation}
        y \sim y' \Longleftrightarrow B_y(r) \cap B_{y'}(r) \text{ contains a
        ball of radius } r/2.
    \end{equation}
    Since $\BY$ is a maximal $r/2$-separated set in the connected set $B_x(R)$, $\BG$ is a connected graph,
    and by Proposition \ref{prop:covering_balls}, the cardinality of its vertex
    set is bounded above by a constant $N$ depending only on $r,\kappa,R$ and
    $d$. 
%    Furthermore, $B_x(R)$ is covered by the balls of radius $r$ centred at $y
%    \in \BY$.

There is $y_0 \in
    \BY$ such that
    \begin{equation}
        \Vol(\omega \cap B_{y_0}(r/2)) \ge \frac \eps N \Vol(B_x(R)) \ge \frac \eps
        N \Vol(B_{y_0}(r)).
    \end{equation}
    In particular, it follows from Proposition \ref{prop:LM_smallballs} that
    there are $C > 0$ and $\alpha \in (0,1)$ such that
    \begin{equation}
        \|u\|_{\RL^2(B_{y_0}(r))} \le C \|u\|_{\RL^2(\omega)}^\alpha \|w\|^{1 -
            \alpha}_{\RL^2(B_x(R') \times [-T,T])}.
    \end{equation}
    Let $y \in \BY$ be such that $y \sim y'$. Applying Proposition
    \ref{prop:LM_smallballs}, with the smaller set $\omega'$ being the ball of
    radius $r/4$ with same center as the ball of radius $r/2$ in the intersection, we get
    \begin{equation}
        \begin{aligned}
            \|u\|_{\RL^2(B_{y}(r))} &\le 
            C \|u\|_{\RL^2(\omega')}^\alpha \|w\|^{1 -
            \alpha}_{\RL^2(B_x(R') \times [-T,T])}
            \\
            &\le 
            C \|u\|_{\RL^2(\omega)}^{\alpha^2} \|w\|^{1 -
            \alpha^2}_{\RL^2(B_x(R') \times [-T,T])}.
        \end{aligned}
    \end{equation}
    Applying Proposition \ref{prop:LM_smallballs} repeatedly from neighbours to
    neighbours in $\BG$ in the same way, we obtain in the end that for any $y \in \BY$,
    \begin{equation}
        \label{eq:ineveryball}
        \|u\|_{\RL^2(B_y(r))} \le 
        C \|u\|_{\RL^2(\omega)}^{\alpha^{\operatorname{diam}(\BG)}} \|w\|^{1 -
        \alpha^{\operatorname{diam}(\BG)}}_{\RL^2(B_x(R') \times [-T,T])},
    \end{equation}
    and since the diameter of $\BG$ is bounded by the number of its vertices we
    obtain our claim after summing \eqref{eq:ineveryball} over all $y \in \BY$.
\end{proof}

Finally, we globalise Proposition \ref{prop: LM_largeballs}.

\begin{proposition}\label{prop:LM_global}
    Let $d \ge 2$ and $\kappa,R,\eps,T > 0$. There exist $C>0$ and $\beta \in
    (0,1)$ so that the following holds.

    For every $(M,g) \in \CM(d,\kappa)$, any pairs $u : M \to \R$ and $w : M
    \times [-T,T] \to \R$ solving
    \begin{equation}
        \begin{cases}
            (\Delta_g + \partial_t^2)w = 0 & \text{in } M \times [-T,T] \\
        w(y,0) \equiv 0  & \text{ and } \del_t w(y,0) = u(y) 
        \end{cases}
    \end{equation}
    and any open set $\omega \subset M$ such that for all $x \in M$,
    \begin{equation}
        \Vol(\omega \cap B_x(R)) \ge \eps \Vol(B_x(R))
    \end{equation}
    we have the estimate
	\[
	\|u\|_{\RL^2(M)}\leq C \|u\|_{\RL^2(\omega)}^{\beta}\|w\|_{\RL^2(M\times (-T,T))}^{1-\gamma}.
	\]
\end{proposition}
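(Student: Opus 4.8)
The plan is to cover $M$ by balls of radius $R$, apply Proposition \ref{prop: LM_largeballs} on each of them with the \emph{localized} thick set, and then reassemble the local estimates by a discrete Hölder inequality. Concretely, set $R' = R + r_\kappa$ and let $\BY = \{x_i\}_{i\in I}$ be a maximal $R$-separated subset of $M$ (with $I$ possibly infinite). By maximality $\{B_{x_i}(R)\}_{i\in I}$ covers $M$, and since $y\in B_{x_i}(\rho)$ is equivalent to $x_i\in\BY\cap B_y(\rho)$, Proposition \ref{prop:covering_balls} shows that the covers $\{B_{x_i}(R)\}$ and $\{B_{x_i}(R')\}$ have overlap multiplicities bounded by constants $N_1$ and $N_0$ depending only on $d,\kappa,R$. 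Because $\omega$ is $(R,\eps)$-thick, each $\omega_i := \omega\cap B_{x_i}(R)$ has $\Vol(\omega_i)\ge\eps\,\Vol(B_{x_i}(R))$, so Proposition \ref{prop: LM_largeballs} applies on every $B_{x_i}(R)$ (the global $w$ solves the equation there a fortiori) with uniform $C>0$ and $\beta\in(0,1)$:
\[
\|u\|_{\RL^2(B_{x_i}(R))}^2 \le C^2\,\|u\|_{\RL^2(\omega_i)}^{2\beta}\,\|w\|_{\RL^2(B_{x_i}(R')\times[-T,T])}^{2(1-\beta)}.
\]

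Summing over $i$, the covering property gives $\|u\|_{\RL^2(M)}^2\le\sum_i\|u\|_{\RL^2(B_{x_i}(R))}^2$, while on the right I would apply Hölder's inequality for series with the conjugate exponents $1/\beta$ and $1/(1-\beta)$ to the sequences $(\|u\|_{\RL^2(\omega_i)}^{2\beta})_i$ and $(\|w\|_{\RL^2(B_{x_i}(R')\times[-T,T])}^{2(1-\beta)})_i$, which are set up precisely so that the fractional powers cancel:
\[
\sum_i\|u\|_{\RL^2(\omega_i)}^{2\beta}\|w\|_{\RL^2(B_{x_i}(R')\times[-T,T])}^{2(1-\beta)} \le \left(\sum_i\|u\|_{\RL^2(\omega_i)}^2\right)^{\!\beta}\left(\sum_i\|w\|_{\RL^2(B_{x_i}(R')\times[-T,T])}^2\right)^{\!1-\beta}.
\]
By the overlap bounds, $\sum_i\|u\|_{\RL^2(\omega_i)}^2\le N_1\|u\|_{\RL^2(\omega)}^2$ and $\sum_i\|w\|_{\RL^2(B_{x_i}(R')\times[-T,T])}^2\le N_0\|w\|_{\RL^2(M\times[-T,T])}^2$, and combining these yields $\|u\|_{\RL^2(M)}\le C\,N_1^{\beta/2}N_0^{(1-\beta)/2}\,\|u\|_{\RL^2(\omega)}^\beta\,\|w\|_{\RL^2(M\times[-T,T])}^{1-\beta}$, which is the claim (and in passing shows the left-hand side is finite whenever the right-hand side is).

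The one point requiring care — and the reason one cannot merely sum the local interpolation inequalities — is that $M$ need not be compact, so $I$ may be infinite and $\sum_i t_i^{2(1-\beta)}<\infty$ does not follow from $\sum_i t_i^2<\infty$ for general sequences $(t_i)$. This is defused by two features of the argument above: first, in Proposition \ref{prop: LM_largeballs} the thick set enters only through $\omega\cap B_{x_i}(R)$, so no global quantity is summed $\#I$ times; second, the discrete Hölder step with the conjugate pair $(1/\beta,1/(1-\beta))$ exactly reconstitutes the summable $\ell^1$-quantities $\sum_i\|u\|_{\RL^2(\omega_i)}^2$ and $\sum_i\|w\|_{\RL^2(B_{x_i}(R')\times[-T,T])}^2$. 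The remaining ingredients — existence of the maximal separated set, the covering, the overlap counts from Proposition \ref{prop:covering_balls} — are routine and uniform over $\CM(d,\kappa)$; one also checks directly that nothing breaks when $M$ has compact or disconnected components, since on a small component the maximal separated set reduces to a single point and the single ball still covers it.
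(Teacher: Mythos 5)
Your proof is correct and follows essentially the same route as the paper's: cover $M$ by a maximal $R$-separated family, apply Proposition \ref{prop: LM_largeballs} on each ball with the localized thick set, recombine by a discrete H\"older inequality with exponents $1/\beta$ and $1/(1-\beta)$, and absorb the overlap multiplicities via Proposition \ref{prop:covering_balls}. One point in your favour is the reassembly step, which you handle more carefully than the published text: by squaring the local interpolation inequality before applying H\"older, the quantities that emerge on the right are $\sum_i\|u\|_{\RL^2(\omega_i)}^2$ and $\sum_i\|w\|_{\RL^2(B_{x_i}(R')\times[-T,T])}^2$, i.e.\ $\ell^1$-sums of $\RL^2$-norms \emph{squared}, which the finite-overlap bound genuinely controls by the global $\RL^2$-norms squared. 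The paper's displayed computation instead applies H\"older to unsquared $\RL^2$-norms and then invokes $\sum_y\|u\|_{\RL^2(\omega\cap B_y(R))}\le N\|u\|_{\RL^2(\omega)}$ (and the analogous estimate for $w$); but when $\BY$ is infinite, as it is whenever $M$ is non-compact, finite overlap only gives $\ell^2$-control of those summands, not $\ell^1$-control, so that step does not follow as written. Your squaring, which you correctly flag as the delicate point in the non-compact setting, is exactly what is needed to close this: same idea, tighter execution.
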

\begin{proof}
    Let $\BY$ be a maximal $R$-separated set inside $M$. By construction,
    \begin{equation}
        M \subset \bigcup_{y \in \BY} B_y(R),
    \end{equation}
    which imples, by Proposition \ref{prop: LM_largeballs}, that there is $C > 0$ and $\beta
    \in (0,1)$ so that
    \begin{equation}
        \label{eq:uboundedinballs}
	\|u\|_{\RL^2(M)}\leq \sum_{y\in \BY}\|u\|_{\RL^2(B_y(R))} \le C \sum_{y \in
    \BY}  \|u\|^\beta_{\RL^2(\omega \cap B_y(R))}
\|w\|^{1-\beta}_{\RL^2(B_y(R+r_\kappa) \times [-T,T])}
    \end{equation}
    There is $N > 0$, depending only on $d,\kappa$ and $R$, so that for every $y \in \BY$
    \begin{equation}
        \#\{z \in \BY : B_z(R) \cap B_y(R) \ne \varnothing\} \le N.
    \end{equation}
    Indeed, the intersection $B_z(R) \cap B_y(R)$ being non-empty is equivalent
    to $z \in B_y(2R)$ and, by Proposition \ref{prop:covering_balls}, there is
    $N$, independent of $y$, bounding the cardinality of an $R$-separated family
    in $B(y,2R)$. In particular,
    \begin{equation}
        \label{eq:uboundedmult}
	\sum_{y \in \BY}\|u\|_{\RL^2(\omega \cap B_y(R))}\leq N\|u\|_{\RL^2(\omega)}
\end{equation}
	and
    \begin{equation}
        \label{eq:wboundedmult}
        \sum_{y\in \BY}\|w\|_{\RL^2(B_y(R + \tfrac{r_\kappa}{4})\times
        (-T,T))}\leq N\|w\|_{\RL^2(M \times (-T,T))}.
    \end{equation}
    Inserting \eqref{eq:uboundedmult} and \eqref{eq:wboundedmult} in
    \eqref{eq:uboundedinballs} and applying H\"older's inequality, we obtain
    finally
    \begin{equation}
	\begin{aligned}
		\|u\|_{\RL^2(M)}
		&\leq C\sum_{y \in \BY}\|u\|_{\RL^2(\omega \cap
        B_y(R))}^{\beta}\|w\|_{\RL^2(B_y(R)\times (-T,T))}^{1-\beta}\\
		&\leq C\left(\sum_{y \in \BY}\|u\|_{\RL^2(\omega \cap B_y(R))}\right)^{\beta}\left(\sum_{y \in \BY}\|w\|_{\RL^2(B_y(R +
        r_\kappa) \times (-T,T))}\right)^{1-\beta}\\
		&\leq CN\|u\|_{\RL^2(\omega)}^{\beta}\|w\|_{\RL^2(M \times
        (-T,T))}^{1-\beta}.
	\end{aligned}
\end{equation}
	
\end{proof}

	\section{Proof of the main theorems}
\label{sec:endgame}

\begin{proof}[Proof of Theorem \ref{theo: main result}]
Let $M \in \CM(d,\kappa)$, $R, \eps > 0$ and let $\omega \subset M$ be an
$(\eps,R)$-thick set, and $u \in \RL^2(M)$. Recall that $\mu$ is the spectral
measure for  $\sqrt{-\Delta_g}$ and for $\Lambda > 0$, $\Pi_\Lambda$ is the spectral
projection on $[0,\Lambda]$, that is
\begin{equation}
    \Pi_\Lambda u = \int_0^{\Lambda}  \de \mu(\lambda) u.
\end{equation}
Define $w_\Lambda : M \times [-T,T]$ via the expression
\begin{equation}
    w_\Lambda(x,t) = \int_0^{\Lambda} \frac{\sinh(\lambda t)}{\lambda} [\de
    \mu(\lambda) u](x).
\end{equation}
By construction,
\[
(\Delta_g+\partial_t^2)w_{\Lambda}=0\qquad w_{\Lambda}|_{t=0}=0 \qquad
\partial_tw_{\Lambda}|_{t=0}=\Pi_{\Lambda}u,
\]
so that 
applying Proposition \ref{prop:LM_global}, we obtain
\begin{equation}
    \label{eq:afterLM}
\|\Pi_{\Lambda}u\|_{\RL^2(M)}\leq \|\Pi_{\Lambda}
u\|_{\RL^2(\omega)}^{\gamma}\|w_\Lambda\|_{\RL^2(M \times (-T,T))}^{1-\gamma}.
\end{equation}
For any bounded measurable function $\varphi \in
\mathrm{L}^\infty(\mathbb{R}^+)$, the functional calculus yields the estimate
\[
\|\varphi\left(\sqrt{-\Delta_g}\right) \Pi_\Lambda u\|_{\RL^2(M)} \leq \|\varphi\|_{\RL^\infty([0,\Lambda])} \|\Pi_\Lambda u\|_{\RL^2(M)}.
\]
Since $\|\sinh(\lambda \cdot)\|_{\RL^\infty(-T,T)} \le e^{\lambda T}$, we have that
\begin{equation}
    \|w_\Lambda\|_{\RL^2(M \times [-T,T])} \le e^{\Lambda T}
    \|\Pi_\Lambda u\|_{\RL^2(M)}.
\end{equation}
Inserting this into \eqref{eq:afterLM} gives us
\[
    \|\Pi_\Lambda u\|_{\RL^2(M)} \le C e^{\tfrac{T}{\gamma} \Lambda}
    \|u\|_{\RL^2(\omega)}, 
\]
in other words \eqref{eq: spec} holds for $\omega$.
\end{proof}

\begin{proof}[Proof of Theorem \ref{theo: fixed time obs}]
	Letting $v\in \RL^2(M)$, one has
\[
e^{s\Delta_g}v = \int_0^\infty e^{-s\lambda^2}\dd \mu(\lambda)v.
\]
Now, for $s>0$, if we let
\[
    w_s(x,t) = \int_0^\infty e^{-s\lambda^2}\frac{\sinh(\lambda t)}{\lambda}[\dd \mu(\lambda)v](x)
\]
one has again $w_s\in \RW^{1,2}(M\times [-T,T])$ with
\[
\|w_s\|_{\RL^2(M\times [-T,T])}\leq C(s,T)\|v\|_{\RL^2(M)};
\]
essentially, the quadratic frequency decay from the heat equation beats the growth of $\sinh$.

In particular, applying Proposition \ref{prop:LM_global}, we obtain
\[
\|e^{s\Delta_g}v\|_{\RL^2(M)}\leq C(s)\|e^{s\Delta_g}v\|_{\RL^2(\omega)}^{\gamma}\|v\|_{\RL^2(M)}^{1-\gamma},
\]
and the proof is complete.
\end{proof}

	\bibliographystyle{alpha}
	\bibliography{fullbib}
\end{document}